\newtheorem{theorem}{Theorem}
\newtheorem{lemma}[theorem]{Lemma}
\newtheorem{claim}[theorem]{Claim}
\newtheorem{cor}[theorem]{Corollary}
\newtheorem*{restate2}{Theorem 2}
\newtheorem*{restate3}{Theorem 3}
\newcommand{\h}{\mathcal{H}}
\newcommand\ex{\ensuremath{\mathrm{ex}}}
\title{Tur\'an numbers for hypergraph star forests}
\author{
Omid Khormali
\qquad
Cory Palmer\\
\small Department of Mathematical Sciences\\[-0.8ex]
\small University of Montana\\[-0.8ex]
\small Missoula, Montana 59812, USA.\\
\small \texttt{omid.khormali@umontana.edu, cory.palmer@umontana.edu}
}
\begin{document}

\maketitle

\begin{abstract}
Fix a graph $F$. We say that a graph is {\it $F$-free} if it does not contain $F$ as a subgraph. The {\it Tur\'an number} of $F$, denoted $\ex(n,F)$, is the maximum number of edges possible in an $n$-vertex $F$-free graph. The study of Tur\'an numbers is a central problem in graph theory. The goal of this paper is to generalize a theorem of Lidick\'y, Liu and Palmer [{\it Electron.\ J.\ of Combin.}\ {\bf 20} (2016)] that determines $\ex(n,F)$ for $F$ a forest of stars. In particular, we consider  generalizations of the problem to three different well-studied hypergraph settings and in each case we prove an asymptotic result for all reasonable parameters defining our ``star forests''.
\end{abstract}

\section{Introduction}

Let $\mathcal{F}$ be an $r$-uniform hypergraph. A hypergraph is {\it $\mathcal{F}$-free} if it has no subgraph isomorphic to $\mathcal{F}$. The {\it Tur\'an number} of $\mathcal{F}$ is the maximum number of hyperedges in an $r$-uniform $n$-vertex $\mathcal{F}$-free hypergraph. We denote this maximum by $\ex_r(n,\mathcal{F})$. When $r=2$ we are considering the problem to determine $\ex(n,F) = \ex_2(n,F)$ where $F$ is an ordinary graph. This is a central and well-studied problem in graph theory. For example, Tur\'an's seminal theorem \cite{turan} determines exactly the Tur\'an number for a complete graph $K_k$.
The fundamental Erd\H{o}s-Stone-Simonovits theorem \cite{Erdos-Simonovits, Erdos-Stone} gives the following asymptotic result for all $k$-chromatic graphs $F$:
\[
\ex(n,F) = \left(1-\frac{1}{k-1} \right) \frac{n^2}{2} + o(n^2).
\]
Note that when $F$ is bipartite this only gives $\ex(n,F) = o(n^2)$. Determining Tur\'an numbers for bipartite graphs remains an active area of research. See the survey of F\"uredi and Simonovits \cite{Furedi-Simonovits} for an extensive history. Classic bounds are given by Erd\H os and Gallai \cite{EG} for paths, K\H ov\'ari, S\'os and Tur\'an \cite{kst} for complete bipartite graphs and Bondy and Simonovits \cite{Bondy} for even cycles. Erd\H os and S\'os \cite{Er-sos} conjectured $\ex(n,F) \leq \frac{t-2}{2}n$ when $F$ is a tree on $t$ vertices. Ajtai, Koml\'os, Simonovits and Szemer\'edi announced a proof of this conjecture. The conjecture is also known to hold for various classes of specific trees. Unlike what is conjectured for trees, the Tur\'an number of forests depends heavily on the structure of the forest. 

Let $S_\ell$ be the star with $\ell$ edges (i.e., $\ell$ distinct edges all sharing the same common vertex) and let $P_\ell$ denote\footnote{Note that in related works the notation $P_\ell$ is sometimes used for the $\ell$-vertex path and at other times for the $\ell$-edge path.} the path of length $\ell$, i.e., the path with $\ell$ edges and $\ell+1$ vertices (it will be convenient later when dealing with hypergraphs that we keep track of the number of hyperedges rather than vertices).
For a graph (or hypergraph) $F$ let $k \cdot F$ denote the graph composed of $k$ pairwise vertex-disjoint copies of the graph $F$. Bushaw and Kettle \cite{BK} gave the following bounds for a forest of paths where $k\geq 2$, $\ell \geq 3$ and $n$ large enough,
\[
\ex(n, k\cdot P_{\ell})=\left(k\left\lfloor\frac{\ell+1}{2}\right\rfloor-1\right) \left(n-k\left\lfloor\frac{\ell+1}{2}\right\rfloor+1\right)+\binom{k\left\lfloor\frac{\ell+1}{2}\right\rfloor-1}{2}+c_{\ell}
\] 
where $c_{\ell}= 1$ if $\ell+1$ is odd, and $c_{\ell}= 0$ if $\ell+1$ is even.
Compare this bound with a special case of a theorem of Lidick\'y, Liu and Palmer \cite{LLP} for a forest of stars (see also \cite{JPS} for a simplified proof).

\begin{theorem}[Lidick\'y, Liu and Palmer,
\cite{LLP}]\label{LLP-star}
Fix integers $\ell, k \geq 1$. Then for $n$ be large enough,
\[ 
\ex(n,k \cdot S_{\ell}) = \left\lfloor \frac{\ell-1}{2}(n-k+1)\right\rfloor + (k-1)(n-k+1)+\binom{k-1}{2}.
\]
\end{theorem}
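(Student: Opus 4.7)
The plan is to prove the lower bound via an explicit extremal construction and the upper bound by induction on $k$.

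For the lower bound, take $G_0 := K_{k-1}\vee H$, the join of a clique on $k-1$ vertices with a graph $H$ on $n-k+1$ vertices of maximum degree $\ell-1$ having $\lfloor(\ell-1)(n-k+1)/2\rfloor$ edges; for $n$ large such an $H$ exists (e.g., an $(\ell-1)$-regular graph, adjusted for parity). Counting the edges within $H$, between the clique and $H$, and within $K_{k-1}$ gives exactly the claimed formula. To see that $G_0$ is $k\cdot S_\ell$-free, observe that any $k$ pairwise vertex-disjoint copies of $S_\ell$ must, by pigeonhole on the $k-1$ clique vertices, contain at least one copy whose vertex set lies entirely in $V(H)$; but $\Delta(H)\leq \ell-1$, so $H$ has no $S_\ell$.

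For the upper bound I induct on $k$. The base case $k=1$ is the standard fact that $S_\ell$-freeness is equivalent to $\Delta(G)\leq \ell-1$, giving $\ex(n,S_\ell)=\lfloor(\ell-1)n/2\rfloor$. In the inductive step let $G$ be $k\cdot S_\ell$-free on $n$ (large) vertices. If $\Delta(G)\leq \ell+k-2$, then $e(G)\leq n(\ell+k-2)/2$, which is at most the claimed target provided $n\geq \ell+k-1$ (a short algebraic check). Otherwise fix a vertex $v$ with $d(v)\geq \ell+k-1$. If $G-v$ is $(k-1)\cdot S_\ell$-free, then combining the induction hypothesis with the identity $\ex(n-1,(k-1)\cdot S_\ell)+(n-1)=\ex(n,k\cdot S_\ell)$ (an easy calculation from the formula) yields $e(G)\leq e(G-v)+d(v)\leq \ex(n,k\cdot S_\ell)$.

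The crux of the proof is to rule out the remaining sub-case, in which $G-v$ contains a $(k-1)\cdot S_\ell$-packing $\mathcal{P}$ with stars $T_1,\dots,T_{k-1}$, where $T_i$ has center $c_i$ and leaf set $L_i$. Here the goal is to extend $\mathcal{P}$ to a $k\cdot S_\ell$-packing in $G$, contradicting the hypothesis. If $d(v)\geq (k-1)(\ell+1)+\ell$ then $v$ has $\geq \ell$ neighbors outside $V(\mathcal{P})$, so a new star at $v$ extends $\mathcal{P}$ immediately. In the harder intermediate range $\ell+k-1\leq d(v)<(k-1)(\ell+1)+\ell$, my plan is to choose $\mathcal{P}$ to minimize $|N(v)\cap V(\mathcal{P})|$ and execute a swap argument: if $v$ still has fewer than $\ell$ neighbors outside $V(\mathcal{P})$, pigeonhole forces some $T_i$ to contain two neighbors of $v$, and we try to substitute a leaf $u\in L_i\cap N(v)$ by an alternative vertex $u^*\in N(c_i)\setminus V(\mathcal{P})\setminus N(v)$, strictly decreasing $|N(v)\cap V(\mathcal{P})|$ and contradicting minimality. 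The main technical obstacle is to guarantee such a substitute $u^*$ always exists in this intermediate degree range; when it does not, $N(c_i)\setminus V(\mathcal{P})\subseteq N(v)$, and one must leverage this structural constraint on the common neighborhoods---either by a further swap using another pair of neighbors of $v$, or by bounding the edges leaving $V(\mathcal{P})$ directly---to close the argument. Combining these cases cleanly so as to match the exact target (including the floor function and the $\binom{k-1}{2}$ term, not merely the leading order) is where the bulk of the technical work lies.
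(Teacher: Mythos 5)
Your lower-bound construction and the easy branches of the induction (the base case $k=1$, the case $\Delta(G)\leq \ell+k-2$, and the case where $G-v$ is $(k-1)\cdot S_\ell$-free, using the identity $\ex(n-1,(k-1)\cdot S_\ell)+(n-1)=\ex(n,k\cdot S_\ell)$) are all fine. The problem is the sub-case you yourself flag as the crux, and the gap there is not merely technical: the contradiction you are trying to derive does not exist. You assume only $d(v)\geq \ell+k-1$ and that $G-v$ contains a $(k-1)\cdot S_\ell$-packing, and you aim to show $G$ then contains $k\cdot S_\ell$. This is false: take $G=K_{\ell+k}$ together with $n-\ell-k$ isolated vertices. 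This graph is $k\cdot S_\ell$-free (it has fewer than $k(\ell+1)$ non-isolated vertices), every vertex of the clique has degree $\ell+k-1$, and deleting any such vertex leaves $K_{\ell+k-1}$, which does contain a $(k-1)\cdot S_\ell$ when $\ell+k-1\geq (k-1)(\ell+1)$ fails to be needed --- already for $k=2$ it contains $S_\ell$. So in this configuration no choice of packing and no swap can produce the $k$th star; your minimization-plus-substitution plan must break down exactly as in this example (here $N(c_i)\setminus V(\mathcal{P})\subseteq N(v)\cup\{v\}$ vacuously, and no substitute $u^*$ exists). Since \emph{every} high-degree vertex of this $G$ has this property, your induction has no vertex to recurse on, and the branch $\Delta\leq \ell+k-2$ does not apply either; the proof structure genuinely fails rather than merely being unfinished.

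The repair requires a different handling of low versus high degrees, and this is where your route diverges from the paper's. The paper (Theorem~2 with $r=2$, which is one of its two new proofs of this statement) uses a maximum-degree threshold that grows linearly in $n$ rather than the constant $\ell+k-1$: if $\Delta(G)$ is at least a fixed fraction of $(k-1)(n-k+1)$, then after deleting the max-degree vertex $x$ and applying induction to get a $(k-1)\cdot S_\ell$, the vertex $x$ still has $\Omega(n)$ neighbours outside the $(k-1)(\ell+1)$ packing vertices, so the $k$th star is found greedily --- no swapping is needed. If instead $\Delta(G)$ is below that linear threshold, one does not bound $e(G)$ by $n\Delta/2$ (too weak); one takes a maximum packing $t\cdot S_\ell$ with $t\leq k-1$, deletes its at most $(k-1)(\ell+1)$ vertices, loses only $(k-1)(\ell+1)\Delta(G)$ edges, and checks that more than $\ex(n-k+1,S_\ell)$ edges survive, yielding a disjoint $S_\ell$ and contradicting maximality. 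Adopting that dichotomy (constant-size packing versus linear-degree vertex) would let you keep your overall induction on $k$ and dispense entirely with the swap argument; as written, your intermediate-degree case cannot be closed.
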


Determining Tur\'an numbers for hypergraphs of uniformity $r\geq 3$ has been significantly more difficult than in the graph case. For example, we do not know the Tur\'an number of $K_4^3$, the $3$-uniform $4$-vertex complete hypergraph. Another open example is the star $S_\ell^+$ which is the $r$-uniform hypergraph consisting of $\ell$ hyperedges all sharing exactly one common vertex. This problem is related to the sunflower problem \cite{ER} which suggests why it appears to be difficult.

Motivated by Theorem~\ref{LLP-star} and the difficulty of determining hypergraph Tur\'an numbers, the goal of this paper is to find Tur\'an numbers of various analogues of ``star forests'' in the hypergraph setting. Before stating our main theorems we need several definitions.

Given a graph $F$, the {\it expansion} of $F$ is the $r$-uniform hypergraph $F^+$ constructed by adding $r-2$ new distinct vertices to each edge of $F$. Note that when $r=2$, then the expansion $F^+$ is simply the graph $F$.                      Tur\'an numbers for various expansions have been investigated.
Mubayi \cite{mubayi} and Pikhurko \cite{pikhurko} considered the case when $F$ is a complete graph. F\"uredi and Jiang \cite{FJ}; F\"uredi, Jiang, and  Seiver \cite{FJS}; and F\"uredi \cite{F-tree} examined the case when  $F$ is a path, cycle, or tree, respectively.
A series of papers 
\cite{kmv I, kmv II, kmv III} by Kostochka, Mubayi and  Verstra\"{e}te also consider 
expansions for paths, cycles, trees, as well as other graphs. 
Bushaw and Kettle \cite{BK2} consider the case $k \cdot P_\ell^+$, i.e, a forest $k$ disjoint expansions of the path $P_\ell$. The case when $F$ is the expansion of a star $S_\ell$ has considerable history. See the survey of Mubayi and Verstra\"ete \cite{mv} for an overview.
Our first two main theorems involve a hypergraph forest composed of the expansion of stars, i.e., $k \cdot S_\ell^+$. First we prove:

\begin{theorem}\label{forest-nonlinear}
Fix integers $\ell,k\geq 1$ and $r\geq 2$. Then for $n$ large enough,
\[
\ex_r(n,k \cdot S_\ell^+)= \binom{n}{r}-\binom{n-k+1}{r}+ \ex_r(n-k+1, S_\ell^+).
\]
\end{theorem}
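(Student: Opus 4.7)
The plan is to exhibit a construction for the lower bound and argue by induction on $k$ (together with a parallel induction on $n$ for a boundary case) for the upper bound. For the lower bound I would designate a set $C$ of $k-1$ ``central'' vertices and take all $\binom{n}{r}-\binom{n-k+1}{r}$ hyperedges meeting $C$ together with an extremal $S_\ell^+$-free hypergraph on the remaining $n-k+1$ vertices, for a total of exactly $f_k(n):=\binom{n}{r}-\binom{n-k+1}{r}+\ex_r(n-k+1,S_\ell^+)$ hyperedges. The construction is $k\cdot S_\ell^+$-free by pigeonhole: any $k$ pairwise vertex-disjoint expanded stars can share the $k-1$ central vertices among at most $k-1$ of the stars, forcing at least one star to lie entirely in the $S_\ell^+$-free non-central part, a contradiction.

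For the upper bound I would argue by induction on $k$, with $k=1$ trivial. For $k\geq 2$, fix a $k\cdot S_\ell^+$-free $\h$ on $n$ vertices, with $n$ large in $k,\ell,r$. The main tool is an \emph{extension lemma}: if $v$ satisfies $d_\h(v)\geq T$ for a threshold $T=O(n^{r-2})$ depending on $k,\ell,r$, then $\h-v$ is $(k-1)\cdot S_\ell^+$-free. The proof is by contradiction: any copy of $(k-1)\cdot S_\ell^+$ in $\h-v$ on a constant-size vertex set $Y$ gives a link of $v$ in $V(\h)\setminus(Y\cup\{v\})$ with at least $d_\h(v)-|Y|\binom{n-2}{r-2}$ edges; an Erd\H os--Gallai matching bound (an $(r-1)$-uniform hypergraph on $m$ vertices with matching number below $\ell$ has at most $(r-1)(\ell-1)\binom{m-1}{r-2}$ edges) produces $\ell$ pairwise vertex-disjoint link edges, forming an $S_\ell^+$ at $v$ disjoint from $Y$ and hence $k\cdot S_\ell^+$ in $\h$, a contradiction. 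I then iterate: pick successive maximum-degree vertices $v_1,v_2,\ldots$ from $\h,\h-v_1,\h-\{v_1,v_2\},\ldots$, while the (shrinking) extension threshold is met. If the process reaches step $k-1$, then $\h_{k-1}$ is $S_\ell^+$-free; combining $e(\h_{k-1})\leq\ex_r(n-k+1,S_\ell^+)$ with $d_{\h_{i-1}}(v_i)\leq\binom{n-i}{r-1}$ and the Pascal identity $\binom{n-k+1}{r}+\sum_{i=1}^{k-1}\binom{n-i}{r-1}=\binom{n}{r}$ gives $e(\h)\leq f_k(n)$. If the process halts at some $i<k$, then the extension lemma makes $\h_{i-1}$ a $(k-i+1)\cdot S_\ell^+$-free hypergraph on $n-i+1$ vertices, the inductive hypothesis gives $e(\h_{i-1})\leq f_{k-i+1}(n-i+1)$, and the same telescoping closes the argument.

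The hard part will be the boundary case $i=1$, when even the maximum-degree vertex of $\h$ has degree below $T$; then every vertex has degree $O(n^{r-2})$ and the naive bound $e(\h)\leq nT/r$ has the right order but not the right leading constant. My resolution is to run a parallel induction on $n$: if some vertex $v$ satisfies $d_\h(v)\leq f_k(n)-f_k(n-1)$, then the $n$-induction applied to $\h-v$ yields $e(\h)\leq f_k(n-1)+d_\h(v)\leq f_k(n)$. To ensure such a $v$ exists in the boundary case one needs a Dirac-type claim: for $n$ sufficiently large, a $k\cdot S_\ell^+$-free hypergraph with minimum degree exceeding $f_k(n)-f_k(n-1)=\binom{n-1}{r-1}-\binom{n-k}{r-1}+[\ex_r(n-k+1,S_\ell^+)-\ex_r(n-k,S_\ell^+)]$ must itself contain $k\cdot S_\ell^+$ via a refined greedy construction, and this is the main technical ingredient; the threshold matches the minimum degree of the extremal construction, so the Dirac-type bound is sharp.
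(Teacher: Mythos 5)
Your lower-bound construction and your high-degree analysis (delete a maximum-degree vertex, apply induction on $k$, then re-extend using the fact that only $O(n^{r-2})$ hyperedges at $v$ meet the constant-size vertex set of the $(k-1)\cdot S_\ell^+$, while a cover/matching bound in the link produces a disjoint $S_\ell^+$ at $v$) are sound and close in spirit to the paper's Case 2; the telescoping identity $\sum_{i=1}^{k-1}\binom{n-i}{r-1}=\binom{n}{r}-\binom{n-k+1}{r}$ closes those cases correctly. The problem is exactly where you say it is: the boundary case in which every vertex of $\mathcal{H}$ has degree below your threshold $T=O(n^{r-2})$. There you delegate the entire argument to a ``Dirac-type claim'' (minimum degree exceeding $f_k(n)-f_k(n-1)$ forces a $k\cdot S_\ell^+$), which you do not prove. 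This is not a routine verification: already for $\ell=1$ it is essentially a Bollob\'as--Dayk{i}n--Erd\H{o}s-type minimum-degree threshold for hypergraph matchings, and for general $\ell$ the threshold $\approx(k-1)\binom{n-2}{r-2}$ is well below the degree $\approx(\ell-1)(r-1)\binom{n-2}{r-2}$ needed to make a single vertex the center of an $S_\ell^+$, so no naive greedy argument works; a genuinely new argument would be required. (Also, the asserted sharpness is shaky: the extremal construction can have vertices of $B$ with degree $0$ inside the $S_\ell^+$-free part, so its minimum degree is about $\binom{n-1}{r-1}-\binom{n-k}{r-1}$, not your threshold including the term $\ex_r(n-k+1,S_\ell^+)-\ex_r(n-k,S_\ell^+)$.) As written, the proof is incomplete.

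For comparison, the paper avoids this difficulty entirely by choosing a much more generous dichotomy. Its low-degree case is $\Delta(\mathcal{H})<\frac{1}{(k-1)((r-1)\ell+1)}\left(\binom{n}{r}-\binom{n-k+1}{r}\right)$, i.e.\ $\Delta$ may be as large as $c\,n^{r-1}$, and in that regime one does not analyze degrees vertex by vertex at all: take a maximal collection $t\cdot S_\ell^+$ with $t<k$; deleting its at most $(k-1)((r-1)\ell+1)$ vertices removes at most that many times $\Delta(\mathcal{H})$ hyperedges, which by the case assumption is less than the surplus over $\ex_r(n-k+1,S_\ell^+)$, so an additional disjoint $S_\ell^+$ survives, contradicting maximality. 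With the low-degree case dispatched this way, the remaining (high-degree) case is handled by one application of the $k$-induction, and no minimum-degree theorem or parallel induction on $n$ is needed. If you replace your boundary case with this maximal-subforest argument (and correspondingly raise your degree threshold from $O(n^{r-2})$ to $\Omega(n^{r-1})$, which your extension step tolerates), your proof goes through.
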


A theorem of Duke and Erd\H os \cite{DE}
gives $\ex_r(n,S_\ell^+) = \Theta(n^{r-2})$, so Theorem~\ref{forest-nonlinear} gives the asymptotic bound
\[
\ex_r(n,k \cdot S_\ell^+) \sim  \frac{k-1}{(r-1)!} n^{r-1}.
\]

Note that an expansion is a {\it linear} hypergraph, i.e., a hypergraph such that every pair of hyperedges share at most one vertex. It is natural to consider extremal problems where the host hypergraph is linear when the forbidden hypergraph is linear. To that end let $\ex_r^{\operatorname{lin}}(n,F)$ be the maximum number of hyperedges in an $r$-uniform $n$-vertex linear hypergraph containing no subhypergraph isomorphic to $F$. An interesting example is $\ex_3^{\operatorname{lin}}(n,C_3^+)$ which is equivalent to the famous $(6,3)$-problem.
Tur\'an numbers in linear host hypergraphs have been examined in \cite{CGJ, EGM, gmv, lv, craig}. However, most of these papers do not deal specifically with the case when an expansion $F^+$ is forbidden.
In this case we prove:


\begin{theorem}\label{forest-linear}
Fix integers $\ell,k \geq 1$ and $r\geq 2$. Then
for $n$ large enough,
\[
\ex_r^{\operatorname{lin}}(n,k\cdot S_\ell^+) \leq \left(\frac{\ell-1}{r}+\frac{k-1}{r-1}\right)(n-k+1) +\frac{\binom{k-1}{2}}{\binom{r}{2}}.
\]
Furthermore,
this bound is sharp asymptotically.
\end{theorem}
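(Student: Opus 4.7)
The plan is to prove Theorem~\ref{forest-linear} in two parts: a matching construction for the asymptotic sharpness, and an upper-bound argument that proceeds by induction on $k$.

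\smallskip

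\noindent\textbf{Construction.} I would fix a set $W=\{w_1,\dots,w_{k-1}\}$ of $k-1$ ``hub'' vertices. On the remaining $n-k+1$ vertices, place a near-regular $r$-uniform linear hypergraph $\mathcal{H}_0$ of maximum degree $\ell-1$; for $n$ large, such an $\mathcal{H}_0$ with roughly $(\ell-1)(n-k+1)/r$ edges exists by standard partial-Steiner-system constructions. For each $w_i$, attach a \emph{pencil} of roughly $(n-k+1)/(r-1)$ hyperedges of the form $\{w_i\}\cup A$, where $A\subset V\setminus W$ ranges over pairwise disjoint $(r-1)$-subsets within each pencil, chosen carefully across pencils and compatibly with $\mathcal{H}_0$ so that linearity is preserved. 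A Steiner-like packing inside $W$ contributes up to $\binom{k-1}{2}/\binom{r}{2}$ further hyperedges. Any $S_\ell^+$ whose center lies in $V\setminus W$ would need $\ell$ incident edges avoiding $W$, but such a vertex has only $\ell-1$ edges inside $\mathcal{H}_0$; hence every $S_\ell^+$ contains some $w_i$, and $k$ vertex-disjoint copies would require $k$ distinct hubs, contradicting $|W|=k-1$.

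\smallskip

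\noindent\textbf{Upper bound.} The base case $k=1$ is immediate: in a linear hypergraph, any $\ell$ edges through a common vertex already form an $S_\ell^+$, so forbidding $S_\ell^+$ forces $\Delta\le\ell-1$ and hence $e\le(\ell-1)n/r$. For $k\ge 2$, set $D=\ell+(k-1)(1+\ell(r-1))$ and let $V_H=\{v\in V(\mathcal{H}):\deg_{\mathcal{H}}(v)\ge D\}$. A greedy star-packing argument shows $|V_H|\le k-1$: given $k$ candidate centers in $V_H$, at each one linearity ensures that the previously-chosen star vertices block at most $(k-1)(1+\ell(r-1))$ incident edges, leaving $\ge D-(k-1)(1+\ell(r-1))=\ell$ usable edges and yielding $k$ vertex-disjoint $S_\ell^+$'s, a contradiction. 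In the main case $|V_H|=k-1$, I would argue next that $\mathcal{H}[V\setminus V_H]$ has maximum degree at most $\ell-1$: any $v\in V\setminus V_H$ with $\ell$ edges inside $V\setminus V_H$ yields an $S_\ell^+$ disjoint from $V_H$, which combined with $k-1$ vertex-disjoint stars at the $V_H$-vertices (again obtainable from the threshold $D$) produces $k\cdot S_\ell^+$.

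\smallskip

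With these structural facts, I partition the edges of $\mathcal{H}$ by $j=|e\cap V_H|\in\{0,1,\dots,r\}$ and let $b_j$ denote the number of such edges. Then $b_0\le(\ell-1)(n-k+1)/r$ from the max-degree bound on $\mathcal{H}[V\setminus V_H]$; $b_r\le\binom{k-1}{2}/\binom{r}{2}$ from the standard pair bound for linear hypergraphs on the $k-1$ vertices of $V_H$; and for $1\le j\le r-1$, each such edge contributes $j(r-j)\ge r-1$ to the count of pairs with one endpoint in $V_H$ and one in $V\setminus V_H$, of which there are at most $(k-1)(n-k+1)$ by linearity, so $\sum_{j=1}^{r-1} b_j\le (k-1)(n-k+1)/(r-1)$. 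Summing the three bounds recovers the stated upper bound. The principal obstacle is the degenerate subcase $|V_H|<k-1$: here the star-packing argument does not directly force max degree below $\ell$ on $V\setminus V_H$. I would handle this by applying the inductive hypothesis to $\mathcal{H}[V\setminus V_H]$, which inherits no $(k-|V_H|)\cdot S_\ell^+$, and combining with the same mixed-pair counting for edges through $V_H$; the constant-order discrepancies that arise are absorbed using the slack in the lower-order $\binom{k-1}{2}/\binom{r}{2}$ term for $n$ sufficiently large.
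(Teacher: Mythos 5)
Your overall strategy matches the paper's: a high‑degree set of size at most $k-1$, a degree‑$(\ell-1)$ bound on the part avoiding it, a cross‑edge bound via pair counting and linearity, and $\binom{k-1}{2}/\binom{r}{2}$ for edges inside the high‑degree set; the main case $|V_H|=k-1$ and the $|V_H|\le k-1$ claim are fine (modulo taking $D$ a bit larger than your stated value, since the greedy must also avoid the not‑yet‑processed centers). The genuine gap is your treatment of the degenerate case $c=|V_H|<k-1$. Applying the inductive hypothesis to $\mathcal{H}[V\setminus V_H]$ with parameters $(k-c,\,n-c)$ gives at most $\left(\frac{\ell-1}{r}+\frac{k-c-1}{r-1}\right)(n-k+1)+\binom{k-c-1}{2}/\binom{r}{2}$, and adding your cross bound $\frac{c}{r-1}(n-c)$ and the inside bound $\binom{c}{2}/\binom{r}{2}$ exceeds the stated bound by exactly $c(k-1-c)\left(\frac{1}{r-1}-\frac{1}{\binom{r}{2}}\right)=\frac{c(k-1-c)(r-2)}{r(r-1)}$, which is a strictly positive constant whenever $r\ge 3$ and $1\le c\le k-2$. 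This constant does not get ``absorbed for $n$ sufficiently large'': both sides have the same coefficient of $n$, and the constant terms are fixed, so your plan only proves the inequality up to an additive constant in this case (enough for asymptotic sharpness, not for the stated bound). The repair — and the paper's route — is to avoid induction altogether: if the subhypergraph of edges avoiding the high‑degree set had average degree exceeding $\ell-1$, then (since its maximum degree is at most $D$) an averaging lemma produces $\Omega(n)$ vertices of degree at least $\ell$ there, and a greedy choice of $k$ such vertices whose stars avoid each other yields $k\cdot S_\ell^+$ entirely outside the high‑degree set, with no need to complete the packing using high‑degree centers. This gives $|E(\mathcal{H}')|\le\frac{\ell-1}{r}(n-c)$ for every $c\le k-1$, and then monotonicity in $c$ (for $n$ large) yields the stated bound exactly.

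On sharpness, your hubs‑plus‑pencils picture is the same shape as the paper's extremal example, but the crucial step is asserted rather than proved: ``chosen carefully across pencils and compatibly with $\mathcal{H}_0$'' is precisely the statement that there exist $k-1$ near‑perfect matchings of $(r-1)$-sets on $V\setminus W$ and an $(\ell-1)$-regular linear $r$-uniform hypergraph on the same vertex set such that any two sets from different families meet in at most one vertex. The paper realizes this explicitly via the Cartesian product $[r-1]^{k-1}\times[r]^{\ell-1}$ of lattice hypergraphs, using the $r$-edges inside $B^*$ and attaching the colour classes of a canonical colouring of the $(r-1)$-edges to the $k-1$ hub vertices; you should either invoke such an explicit construction or prove the packing‑compatibility claim. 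Also, in your freeness argument the correct statement is that a copy of $S_\ell^+$ avoiding $W$ entirely has all $\ell$ of its hyperedges inside $\mathcal{H}_0$, forcing a center of degree $\ell$ there (a copy whose center lies in $V\setminus W$ may well use pencil edges, but then it meets $W$).
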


For our third main theorem we need a further definition due to Gerbner and Palmer \cite{Gerbner_Palmer}.
For a graph $F$, we say that a hypergraph $\h$ is a {\it Berge}-$F$ if there is an injection $f: V(F) \rightarrow V(\h)$ and bijection $f':E(F)\rightarrow E(\h)$ such that for every edge $uv\in E(F)$ we have $\{f(u),f(v)\}\subseteq f'(uv)$. Alternatively, $\h$ is Berge-$F$ if we can embed a distinct graph edge into each hyperedge of $\h$ to obtain a copy of $F$. Note that for a fixed $F$ there are many different hypergraphs that are a Berge-$F$ and a fixed hypergraph $\h$ can be a Berge-$F$ for more than one graph $F$.

We use the term {\it Berge-$F$-free} for hypergraphs that have no subhypergraph isomorphic to any Berge-$F$.
For a fixed graph $F$, let $\ex_r(n,\text{Berge-}F)$ denote the maximum number of hyperedges in an $r$-uniform $n$-vertex Berge-$F$-free hypergraph.

The behavior of $\ex_r(\text{Berge-}F)$ has been investigated in a number of recent manuscripts. For example, see \cite{FuLa, Gyori_triangle, Gyori_Lemons,GyLe4,GyLe} for cycles, \cite{DGMT, GyKaLe} for paths and \cite{Gerbner_Palmer, gmv,Palmer} for complete bipartite graphs. General results are given in \cite{anssal, GMP, G_P2, GMT}.
For a short survey of extremal results for Berge hypergraphs see Subsection 5.2.2 in \cite{gp}.

As the expansion $F^+$ of $F$ is just a particular instance of a Berge-$F$ we have the following trivial inequality
\[
\ex_r(n, \textrm{Berge-}F) \leq \ex_r(n,F^+).
\]
However, in general these two extremal numbers are not asymptotic. Our third main theorem and Theorem~\ref{forest-nonlinear} give an example of when they differ significantly.


\begin{theorem} \label{bergekS}
    Fix integers $\ell,k \geq 1$, $r\geq 3$ and let $n$ be large enough.
    If $r \geq \ell+k-1$, then
	\[
	\ex_r(n,\textup{Berge-}k \cdot S_{\ell}) \leq \frac{\ell-1}{r-k+1}(n-k+1).
	\]
	If $r \leq \ell+k-2$ then,
	\[
	\ex_r(n,\textup{Berge-}k \cdot S_{\ell}) \leq \left(\binom{\ell+k-1}{r}-\binom{k -1}{r}\right)\left \lceil \frac{n-k+1}{\ell}\right \rceil+\binom{k-1}{r}.
	\]
	Furthermore, both upper bounds are sharp asymptotically.
\end{theorem}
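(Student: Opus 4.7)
The plan is to prove both bounds by induction on $k$. The base case $k=1$ reduces to standard extremal results for Berge-$S_\ell$-free hypergraphs: for $r \geq \ell$, Hall's marriage theorem applied to the $(r-1)$-uniform link at each vertex shows $d_\h(v) \leq \ell-1$ (any $\ell$ distinct $(r-1)$-subsets through a fixed vertex have pairwise union of size at least $r \geq \ell$, verifying Hall's condition and yielding a Berge-$S_\ell$ whenever $d_\h(v) \geq \ell$), which gives $|E(\h)| \leq (\ell-1)n/r$. For $r \leq \ell - 1$, a structural analysis of the link yields the Berge-$S_\ell$-free bound $\binom{\ell}{r}\lceil n/\ell\rceil$.

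For the inductive step, let $v$ be a vertex of maximum degree in a Berge-$k\cdot S_\ell$-free hypergraph $\h$, and consider $\h - v$, the sub-hypergraph with $v$ and its incident hyperedges removed. The plan splits into two subcases. If $\h - v$ is Berge-$(k-1)\cdot S_\ell$-free, apply the inductive hypothesis to bound $|E(\h-v)|$ and control $d(v)$ separately. Otherwise $\h - v$ contains a Berge-$(k-1)\cdot S_\ell$ supported on some set $W \subset V \setminus \{v\}$ of $(k-1)(\ell+1)$ graph vertices; then combining this with any Berge-$S_\ell$ at $v$ whose $\ell$ leaves avoid $W$ would produce a Berge-$k\cdot S_\ell$ in $\h$, a contradiction. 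Hence the reduced link of $v$ on $V \setminus (\{v\} \cup W)$ has no size-$\ell$ SDR, and Hall's condition delivers a vertex cover in the reduced link that constrains $d(v)$ structurally.

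The main obstacle is turning these local constraints into the stated tight bound, since the naive combination of the inductive bound with $d(v)$ is too weak. In Case 1 ($r \geq \ell+k-1$) the resolution is an iterated version of the above: one identifies a set $A$ of at most $k-1$ vertices each satisfying the ``non-deletable'' condition, and passes to the iterated link obtained by deleting $A$ from every hyperedge. This iterated link is $(r-k+1)$-uniform on $V \setminus A$ with $r - k + 1 \geq \ell$, and must be Berge-$S_\ell$-free (else one recovers a Berge-$k \cdot S_\ell$ using the $k-1$ vertices of $A$ as star-centers and the center of the Berge-$S_\ell$ as the $k$-th), so the base case yields $\frac{\ell-1}{r-k+1}(n-k+1)$. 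In Case 2 ($r \leq \ell + k - 2$), the iterated link has uniformity less than $\ell$ and cannot be controlled by the degree bound alone; instead, the Hall-style obstruction from the inductive step forces the link at each vertex of $A$ to concentrate on a partition of the remaining vertices into $\ell$-element \emph{blocks}, and summing hyperedge counts block-by-block gives $\binom{\ell+k-1}{r} - \binom{k-1}{r}$ per block plus $\binom{k-1}{r}$ edges internal to $A$.

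Sharpness follows from the corresponding constructions. In Case 1, fix $A \subseteq V$ with $|A|=k-1$ and let $\h$ consist of all edges of the form $A \cup B$, where $B$ is a hyperedge of an extremal $(r-k+1)$-uniform Berge-$S_\ell$-free hypergraph on $V \setminus A$. In Case 2, fix $A$ with $|A|=k-1$, partition $V \setminus A$ into groups $G_1,\ldots,G_t$ of size $\ell$ (where $t = \lceil(n-k+1)/\ell\rceil$), and take every $r$-subset of each $A \cup G_i$. Berge-$k\cdot S_\ell$-freeness in both cases is verified by a pigeonhole: each of the $k$ pairwise vertex-disjoint stars must place at least one graph vertex into $A$ (either its center lies in $A$, or a leaf is forced into $A$ because in Case 1 the outer hypergraph is Berge-$S_\ell$-free while in Case 2 each block has only $\ell - 1$ non-$A$ vertices besides the center), so the $k$ stars need $k$ distinct $A$-vertices, contradicting $|A| = k-1$.
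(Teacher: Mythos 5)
Your sharpness constructions are exactly the paper's, and your Case 1 plan (isolate a small exceptional set $A$ of at most $k-1$ vertices, argue every other vertex has degree at most $\ell-1$, then count) is in the right spirit, but there is a real gap already there. Your contradiction ``recover a Berge-$k\cdot S_\ell$ using the $k-1$ vertices of $A$ as star-centers'' needs $A$ to contain $k-1$ vertices of very large degree together with a greedy argument that makes the $k$ skeletons pairwise vertex-disjoint (your ``non-deletable condition'' is never made precise enough to supply this). Worse, when $|A|<k-1$ a single vertex outside $A$ of degree $\geq\ell$ is \emph{not} a contradiction, so the truncated hypergraph need not be Berge-$S_\ell$-free; the paper must instead show that the \emph{average} degree outside $A$ is at most $\ell-1+\epsilon$, since otherwise there are $\Omega(n)$ vertices of degree $\geq\ell$ which can be chosen pairwise ``far'' and then yield $k$ hyperedge-disjoint Berge-$S_\ell$'s with disjoint skeletons inside $B$. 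Some argument of this kind is missing from your outline. (Also, the ``iterated link'' is not $(r-k+1)$-uniform and has multiplicities -- edges meet $A$ in varying numbers of vertices and distinct edges can have equal truncations -- though this is repairable by a degree-sum count once the maximum-degree claim outside $A$ is established.)

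The more serious gap is the case $r\leq\ell+k-2$. The sentence asserting that ``the Hall-style obstruction \ldots forces the link at each vertex of $A$ to concentrate on a partition of the remaining vertices into $\ell$-element blocks'' is the entire content of the theorem and is not justified: no SDR/Hall obstruction produces a global block partition, and an arbitrary Berge-$k\cdot S_\ell$-free hypergraph with more than the claimed number of edges has no a priori block structure -- ruling it out is precisely what must be proved. The paper's proof is genuinely different here: it inducts on the uniformity $r$ (not on $k$), with base case $r=2$ supplied by the graph star-forest theorem; for each $x\in A$ it shows the link $\h_x$ is an $(r-1)$-uniform Berge-$(k-1)\cdot S_\ell$-free hypergraph and applies the induction to bound $d(x)$, then compares degree sums with the construction $\h(n,\ell,k,r)$, splitting into $c<k-1$ (averaging plus the far-pairs greedy argument) and $c=k-1$ (a multiset comparison of truncated links $E^y_j$ and a rigidity argument forcing each link in $B$ to be $K_{\ell+k-2}^{r-1}$). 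Your induction on $k$ via a maximum-degree vertex $v$ gives no usable bound on $d(v)$ in the subcase where $\h-v$ is Berge-$(k-1)\cdot S_\ell$-free -- as you acknowledge -- and the proposed resolution does not close that subcase, so the second upper bound is not established by the proposal as written.
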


Theorem~\ref{bergekS} will be the consequence of two theorems proved in Section~\ref{berge-section}. 
We prove Theorem~\ref{forest-nonlinear} in Section~\ref{linear-section-1} and Theorem~\ref{forest-linear} in Section~\ref{linear-section-2}.
Note that the proofs of Theorem~\ref{forest-nonlinear} and \ref{forest-linear} work when $r=2$ which give two new proofs of Theorem~\ref{LLP-star}.

\smallskip

{\bf Notation.} Notation is generally standard and follows the monograph of Bollob\'as \cite{bollobas}. For a hypergraph $\mathcal{H}$, let $E(\mathcal{H})$ and $V(\mathcal{H})$ denote the hyperedge set and vertex set, respectively. For a pair of vertex sets $A$ and $B$, let $E(A,B)$ denote the set of hyperedges that have at least one vertex in both of $A$ and $B$. We will use the term {\it center} to refer to a vertex of degree $\ell$ in a star with $\ell$ edges. In a star $S_\ell$ or $S_\ell^+$ the center is unique. However, in a Berge-$S_\ell$ there may be more than one vertex eligible to be the center.

\section{Forest of expansions of stars}\label{linear-section-1}

We begin with two classic theorems which are generalized by Theorem~\ref{forest-nonlinear}.

\begin{theorem}[Duke and Erd\H os, \cite{DE}]\label{D-E}
Fix integers $\ell\geq 2$ and $r\geq 3$. Then there exists a constant $c(r)$ such that for $n$ large enough,
\[
\ex_r(n,S_\ell^+) \leq c(r)\ell(\ell-1)n^{r-2}.
\]
\end{theorem}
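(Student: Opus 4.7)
The plan is to use the classical link-and-matching approach combined with induction on $\ell$. For each vertex $v$ of an $S_\ell^+$-free $r$-uniform hypergraph $\mathcal{H}$ on $n$ vertices, consider the \emph{link} $L(v)$: the $(r-1)$-uniform hypergraph on $V(\mathcal{H}) \setminus \{v\}$ with edge set $\{e \setminus \{v\} : v \in e \in E(\mathcal{H})\}$. The critical local observation is that $L(v)$ has matching number at most $\ell - 1$, since $\ell$ pairwise disjoint $(r-1)$-edges in $L(v)$ would, upon re-attaching $v$, form $\ell$ hyperedges of $\mathcal{H}$ pairwise intersecting exactly at $v$---that is, an $S_\ell^+$ centered at $v$. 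By the matching-cover principle, a maximum matching $M_v$ of $L(v)$ uses at most $(r-1)(\ell-1)$ vertices (call this set $C_v$) that cover all edges of $L(v)$; equivalently, every hyperedge of $\mathcal{H}$ through $v$ contains some vertex of $C_v$. This gives the degree bound $d(v) \leq (r-1)(\ell-1)\binom{n-1}{r-2}$.

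The naive summation $r|E(\mathcal{H})| = \sum_v d(v)$ yields only $|E(\mathcal{H})| = O_\ell(n^{r-1})$, weaker than the claimed $O(n^{r-2})$. To obtain the sharper bound I would induct on $\ell$: the base case $\ell = 1$ is trivial, and for the inductive step, if $\mathcal{H}$ is additionally $S_{\ell-1}^+$-free then the inductive hypothesis immediately gives $|E(\mathcal{H})| \leq c(r)(\ell-1)(\ell-2)n^{r-2}$. Otherwise, pick a vertex $v$ centering an $S_{\ell-1}^+$ with edges $e_1, \ldots, e_{\ell-1}$, and set $U = \bigcup_{i=1}^{\ell-1} (e_i \setminus \{v\})$, of size $(r-1)(\ell-1)$. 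Every additional hyperedge through $v$ must meet $U$ (else appending it would yield an $S_\ell^+$), so $d(v) \leq (\ell-1) + (r-1)(\ell-1)\binom{n-2}{r-2} = O(\ell n^{r-2})$. Deleting $v$ (or, more aggressively, all of $\{v\} \cup U$) preserves $S_\ell^+$-freeness and reduces to a smaller instance.

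The main obstacle is the cumulative bookkeeping in this iteration: a naive count of up to $n$ single-vertex removals each losing $O(\ell n^{r-2})$ edges would yield only $O(\ell n^{r-1})$ total, short of the target. Closing this gap is the delicate heart of the Duke-Erd\H{o}s argument: one must exploit that as deletions proceed the hypergraph becomes progressively sparser, so the iteration terminates in the $S_{\ell-1}^+$-free regime much earlier than the trivial bound $n$; tracking this self-improving density, combined with the local degree bound above, telescopes down to the desired $c(r)\,\ell(\ell-1)\,n^{r-2}$.
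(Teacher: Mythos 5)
Note first that the paper does not prove this statement at all---it is quoted from Duke and Erd\H{o}s \cite{DE}---so your argument has to stand entirely on its own, and as written it does not: the decisive step is missing and you say so yourself. Your preliminary observations are correct (the link of each vertex $v$ has matching number at most $\ell-1$, hence a vertex cover $C_v$ of at most $(r-1)(\ell-1)$ vertices, giving $d(v)=O(\ell n^{r-2})$, and similarly $d(v)=O(\ell n^{r-2})$ for a vertex centering an $S_{\ell-1}^+$), but every way you propose to assemble them---summing degrees, or deleting one vertex at a time and inducting---only yields $O(\ell n^{r-1})$. The proposed rescue, a ``self-improving density'' that makes the deletion process terminate early, is asserted without any mechanism: the only per-step loss bound available to you is always of order $\ell n^{r-2}$, so the recursion $f(n)\le f(n-1)+O(\ell n^{r-2})$ telescopes to $O(\ell n^{r-1})$ however the density evolves, and nothing forces the process to reach the $S_{\ell-1}^+$-free regime after $o(n)$ steps. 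Indeed, an $(\ell-1)$-regular \emph{linear} $r$-uniform hypergraph is $S_\ell^+$-free while every one of its vertices is the center of an $S_{\ell-1}^+$, so the iteration can genuinely run for $\Theta(n)$ steps; any correct proof must show that when the iteration runs long each step is forced to be cheap, and that quantitative trade-off is exactly the content you have left out. So this is a genuine gap, not a routine omission---it is the heart of the theorem.

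Closing it requires a global idea beyond ``each link has bounded matching number,'' since that hypothesis is just a restatement of $S_\ell^+$-freeness and, used only vertex-by-vertex, is compatible with $\Theta(\ell n^{r-1})$ edges. One workable ingredient is a codegree threshold: call a pair heavy if its codegree is large, show by greedily building petals (avoiding the $O(\ell r)$ vertices already used) that no vertex lies in $\ell$ heavy pairs, count the edges all of whose pairs are light through the links (whose covers now have all their covering vertices of small codegree), and count the edges through heavy pairs by a separate incidence argument; for $r=3$ this bookkeeping already delivers $O(\ell^2 n)$, and larger uniformities are then handled by a further induction on $r$ rather than on $\ell$. I am not asking you to reproduce \cite{DE} verbatim, but some structural input of this kind---codegree thresholds or an induction on the uniformity combined with a global incidence count---is indispensable, and none of it appears in your outline, so the proposal falls short of a proof of the stated bound $c(r)\ell(\ell-1)n^{r-2}$.
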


An easy lower bound of order $n^{r-2}$ on $\ex_r(n,S_\ell^+)$ comes from an $r$-uniform hypergraph of size $\binom{n-2}{r-2}$ consisting of all $r$-sets containing a fixed pair of vertices.

Let $M_k^+$ be a set of $k$ pairwise-disjoint hyperedges of size $r$, i.e., a matching of $k$ hyperedges.

\begin{theorem}[Erd\H os, \cite{Erdos2}] \label{E2}
Fix integers $k \geq 1$ and $r\geq 2$. Then for $n$ large enough,
\[
\operatorname{ex}_r(n,M_k^+)=\binom{n}{r}-\binom{n-k+1}{r}.
\]
\end{theorem}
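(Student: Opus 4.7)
The plan is to prove the lower bound by construction and the upper bound by induction on $k$. For the lower bound, I would fix a set $K$ of $k-1$ vertices of $[n]$ and take $\mathcal{H}_0$ to be the family of all $r$-subsets of $[n]$ meeting $K$. This family has exactly $\binom{n}{r}-\binom{n-k+1}{r}$ edges and is $M_k^+$-free, since $k$ pairwise disjoint edges would require $k$ distinct vertices of $K$, contradicting $|K|=k-1$.

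For the upper bound I would induct on $k$, with the trivial base case $k=1$. Let $\mathcal{H}$ be an $M_k^+$-free $r$-uniform hypergraph on $n$ vertices, and let $\nu$ denote its matching number. If $\nu \leq k-2$, then $\mathcal{H}$ is already $M_{k-1}^+$-free, and the inductive hypothesis yields $|\mathcal{H}| \leq \binom{n}{r}-\binom{n-k+2}{r}$, strictly smaller than the claimed bound. Hence I may assume $\nu = k-1$ and fix a maximum matching $M = \{e_1, \dots, e_{k-1}\}$ with $T = \bigcup_i e_i$ of size $r(k-1)$; every edge of $\mathcal{H}$ must meet $T$.

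The heart of the argument, and the step I expect to be the main obstacle, is to show that some $(k-1)$-subset $K \subseteq T$ already covers every edge of $\mathcal{H}$. I would split edges by their trace on $T$: those with $|e \cap T| \geq 2$ contribute only $O(n^{r-2})$ to $|\mathcal{H}|$. For each $v \in T$, let $d_1(v)$ count edges $e \in \mathcal{H}$ with $e \cap T = \{v\}$. A greedy disjointness argument shows that at most $k-1$ vertices of $T$ can have $d_1(v)$ exceeding a threshold of order $n^{r-2}$: otherwise, choosing one such ``private'' edge through each of $k$ heavy vertices and extending them to be pairwise disjoint outside $T$ (possible since $n$ is large) would produce an $M_k^+$ in $\mathcal{H}$. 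Let $K$ denote the resulting set of heavy vertices, $|K| \leq k-1$.

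A stability refinement then upgrades the asymptotic estimate to the stated exact equality. Applying the same greedy extension to any putative edge $f$ avoiding $K$, together with a private edge through each heavy vertex of $K$, produces $k$ pairwise disjoint edges; hence every edge of $\mathcal{H}$ meets $K$, which gives $|\mathcal{H}| \leq \binom{n}{r}-\binom{n-k+1}{r}$. The hypothesis that $n$ is sufficiently large is essential at this step, since for small $n$ the alternative construction consisting of all $r$-subsets of a fixed $(rk-1)$-element set can exceed the claimed bound—this is precisely the threshold phenomenon underlying the Erd\H os matching conjecture.
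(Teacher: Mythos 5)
First, a point of comparison: the paper does not prove this statement at all --- Theorem~\ref{E2} is quoted from Erd\H{o}s \cite{Erdos2} and used as a black box (the closest in-paper argument is the proof of Theorem~\ref{forest-nonlinear}, which the authors describe as an adaptation of Erd\H{o}s's proof and which runs by induction on $k$ with a dichotomy on the \emph{maximum degree}, not on the structure of a maximum matching). So your proposal has to be judged on its own terms. Its architecture is sound and standard for the large-$n$ regime: the lower-bound construction is correct, the reduction via induction on $k$ to matching number exactly $k-1$ is correct, and the greedy ``private edge'' argument correctly shows that at most $k-1$ vertices of $T$ can have more than $C n^{r-2}$ edges whose trace on $T$ is that single vertex (for a suitable constant $C=C(k,r)$, say $kr\binom{n-2}{r-2}$, which you should make explicit since both greedy extensions rely on it).

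There is, however, one genuine gap in the closing step. The deduction ``every edge of $\mathcal{H}$ meets $K$'' combines a putative edge $f$ avoiding $K$ with one private edge through each heavy vertex, which yields only $|K|+1$ pairwise disjoint edges; this contradicts $M_k^+$-freeness only when $|K|=k-1$. If $|K|\le k-2$ (which certainly can happen), the argument as written proves nothing, and indeed ``every edge meets $K$'' need not hold then. The case is easy to close, but it must be closed: when $|K|\le k-2$, every edge meets $T$ by maximality of the matching, edges with $|e\cap T|\ge 2$ number $O(n^{r-2})$, and edges whose trace on $T$ is a single non-heavy vertex number at most $|T|$ times the threshold, again $O(n^{r-2})$; hence $|E(\mathcal{H})|\le \binom{n}{r}-\binom{n-|K|}{r}+O(n^{r-2})\le (k-2)\binom{n-1}{r-1}+O(n^{r-2})$, which is strictly below $\binom{n}{r}-\binom{n-k+1}{r}\ge (k-1)\binom{n-k+1}{r-1}$ once $n$ is large. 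So the correct structure is a two-case finish: either $|K|\le k-2$ and crude counting already beats the bound, or $|K|=k-1$ and your disjoint-edges argument shows every edge meets $K$, giving the exact value. With that one additional case your proof is complete.
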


The problem to improve the threshold on $n$ for which Theorem~\ref{E2} holds has attracted considerable attention and is known as the Erd\H os Matching Conjecture.
See \cite{FK} for the best-known bound and further historical details.

As we may view $M_k^+$ as a forest of stars each of size $1$, i.e., $k \cdot S^+_1$, Theorem~\ref{E2} serves as an initial case for the Tur\'an number of a forest of the expansion of stars. 
We are now ready to prove Theorem~\ref{forest-nonlinear} which we restate here for convenience. Our proof is an adaptation of the proof of Theorem~\ref{E2}. 

\begin{restate2}
Fix integers $\ell,k\geq 1$ and $r\geq 2$. Then for $n$ large enough,
\[
\ex_r(n,k \cdot S_\ell^+)= \binom{n}{r}-\binom{n-k+1}{r}+ \ex_r(n-k+1, S_\ell^+).
\]
\end{restate2}

\begin{proof}
For the lower bound, consider an $r$-uniform $n$-vertex hypergraph constructed as follows. Let $A$ and $B$ be sets of $k-1$ and $n-k+1$ vertices, respectively. First we embed an $S_\ell^+$-free hypergraph with $\ex_r(n-k+1,S_\ell^+)$ hyperedges into $B$. Next we add every $r$-set that is incident to $A$ to our hypergraph. It is easy to see that this hypergraph has exactly as many hyperedges as in the statement of the theorem. Moreover, as $B$ contains no copy of $S_\ell^+$ each such subgraph must contain at  least one vertex of $A$. Therefore, there is no $k \cdot S_\ell^+$ subgraph.

We now continue with the upper bound.
Let $\mathcal{H}$ be an $r$-uniform $n$-vertex hypergraph with
\[
|E(\mathcal{H})| > \binom{n}{r} - \binom{n-k+1}{r} + \ex_r(n-k+1,S_\ell^+).
\]
We will show that $\mathcal{H}$ contains a copy of $k \cdot S_\ell^+$.
We proceed by induction on $k$. For $k=1$ the base case is immediate as 
$|E(\mathcal{H})| > \ex_r(n,S_\ell^+)$. So let $k>1$ and assume the statement holds for $k-1$. We distinguish two cases based on the maximum degree $\Delta(\h)$ of $\h$.

\bigskip

{\bf Case 1:} The maximum degree satisfies
\[
\Delta(\h) < \frac{1}{(k-1)((r-1)\ell+1)} \left( \binom{n}{r} -\binom{n-k+1}{r}\right).
\]

\bigskip

Consider a copy of $t \cdot S_\ell^+$ in $\mathcal{H}$ such that $t$ is maximal. We claim that $t \geq k$. Indeed, if $t < k$, then at most $(k-1)((r-1)\ell+1)$ vertices are spanned by the $t \cdot S_\ell^+$. Removing these vertices (and the incident hyperedges) leaves at least
\begin{align*}
|E(\mathcal{H})| - (k-1)((r-1)\ell+1) \cdot \Delta(\mathcal{H})  & > 
|E(\mathcal{H})| - \binom{n}{r} - \binom{n-k+1}{r} \\ & > \ex_r(n-k+1,S_\ell^+)
\end{align*}
hyperedges. Therefore, there is a copy of $S_\ell^+$ that is vertex-disjoint from the $t \cdot S_\ell^+$. This violates the maximality of $t$, a contradiction.

\bigskip
{\bf Case 2:} The maximum degree satisfies
\[
\Delta(\h) \geq \frac{1}{(k-1)((r-1)\ell+1)} \left( \binom{n}{r} -\binom{n-k+1}{r}\right).
\]

\bigskip

Let $x$ be a vertex of maximum degree. Observe that $d(x) \leq \binom{n-1}{r-1}$, so
\begin{align*}
|E(\h)|-d(x) & > \binom{n}{r} - \binom{n-1}{r-1} - \binom{n-k+1}{r} + \ex_r(n-k+1,S_\ell^+)\\
& = \binom{n-1}{r} - \binom{n-k+1}{r} - \ex_r(n-k+1,S_\ell^+).
\end{align*}
Therefore, if we remove $x$ from $\h$ and apply induction to the resulting hypergraph we have a copy of $(k-1)\cdot S_\ell^+$. Now it remains to show that there is a copy of $S_\ell^+$ with center $x$ that is vertex-disjoint from the $(k-1)\cdot S_\ell^+$.

First observe that $x$ and any vertex $y \in V((k-1)\cdot S_\ell^+)$ are contained in at most $\binom{n-2}{r-2}$ common hyperedges. Therefore, the number of hyperedges containing $x$ and a vertex of the $(k-1)\cdot S_\ell^+$ is at most
\[
(k-1)((r-1)\ell+1) \binom{n-2}{r-2} = O(n^{r-2}).
\]
On the other hand, $d(x) = \Omega(n^{r-1})$. Therefore, if we remove the hyperedges of the $(k-1)\cdot S_\ell^+$, we are still left with $\Omega(n^{r-1})$ hyperedges incident to $x$. Applying Theorem~\ref{D-E} to these hyperedges gives a copy of $S_\ell^+$ that is vertex-disjoint from the $(k-1) \cdot S_\ell^+$, i.e, $\h$ contains a copy of $k \cdot S_\ell^+$.
\end{proof}

\section{Forest of expansions of stars in linear hypergraphs}\label{linear-section-2}

We need a simple generalization of a lemma proved in \cite{JPS}.

\begin{lemma}[Average Degree Lemma]\label{ADL}
	Fix positive integers $d$ and $\Delta$ and a constant $0 \leq \epsilon <1$. If $\mathcal{G}$ is a hypergraph with average degree at least $d-\epsilon$ and maximum degree at most $\Delta$, then the number of vertices in $\mathcal{G}$ of degree less than $d$ is at most 
	\[
	\frac{\Delta - d+\epsilon}{\Delta - d+1}n.
	\]
	In particular, the number of vertices in $\mathcal{G}$  of degree at least $d$ is $\Omega(n)$.
\end{lemma}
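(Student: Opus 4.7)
The plan is to run a straightforward double-counting on degrees. I let $n = |V(\mathcal{G})|$ and split the vertex set into the low-degree part $L = \{v : d(v) < d\}$ and the high-degree part $H = \{v : d(v) \geq d\}$, writing $x = |L|$ so that $|H| = n - x$. Since degrees are integers, every vertex in $L$ has degree at most $d - 1$, and by the maximum degree hypothesis every vertex in $H$ has degree at most $\Delta$.

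Next I would bound the sum of degrees from below using the average degree hypothesis, obtaining $\sum_{v} d(v) \geq (d - \epsilon) n$, and from above using the partition:
\[
\sum_{v} d(v) \leq (d-1) x + \Delta (n - x) = \Delta n - (\Delta - d + 1) x.
\]
Combining these two inequalities and solving for $x$ yields
\[
x \leq \frac{\Delta - d + \epsilon}{\Delta - d + 1}\, n,
\]
which is the desired bound. (Note that the denominator is positive since $\Delta \geq d$ follows from the existence of a vertex of degree at least the average $d - \epsilon$, with $\epsilon < 1$; if instead $\Delta < d$ the claim is vacuous, as every vertex is in $L$ but then $d - \epsilon \leq \Delta$ forces $\epsilon \geq d - \Delta \geq 1$, a contradiction.)

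For the ``in particular'' statement, I would compute
\[
n - x \geq n \left(1 - \frac{\Delta - d + \epsilon}{\Delta - d + 1}\right) = \frac{1 - \epsilon}{\Delta - d + 1}\, n,
\]
and since $\epsilon < 1$ and both $d$ and $\Delta$ are fixed constants, this is $\Omega(n)$.

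There is no real obstacle here: the whole argument is essentially a one-line averaging inequality, and the only point requiring any care is making sure the denominator $\Delta - d + 1$ is positive so that the inequality can be solved for $x$ in the correct direction. The statement is simply a convenient quantitative packaging of the pigeonhole principle for later use in the paper.
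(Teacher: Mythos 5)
Your proof is correct and follows exactly the paper's argument: bound the degree sum below by $(d-\epsilon)n$, above by $(d-1)x+\Delta(n-x)$, and solve for $x$. The extra remarks on the positivity of $\Delta-d+1$ and the explicit $\frac{1-\epsilon}{\Delta-d+1}n$ count for the ``in particular'' clause are fine additions but do not change the route.
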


\begin{proof}
	The sum of the degrees in $\mathcal{G}$ is at least $(d-\epsilon)n$. On the other hand, if $s$ is the number of vertices of degree less than $d$ in $\mathcal{G}$, then the sum of the degrees in $\mathcal{G}$ is at most $(d-1)s+\Delta(n-s)$. Combining these two estimates and solving for $s$ gives the result.
\end{proof}

We are now ready to prove Theorem~\ref{forest-linear} which we restate here for convenience. 

\begin{restate3}
Fix integers $\ell,k \geq 1$ and $r\geq 2$. Then
for $n$ large enough,
\[
\ex_r^{\operatorname{lin}}(n,k\cdot S_\ell^+) \leq \left(\frac{\ell-1}{r}+\frac{k-1}{r-1}\right)(n-k+1) +\frac{\binom{k-1}{2}}{\binom{r}{2}}.
\]

Furthermore,
this bound is sharp asymptotically.
\end{restate3}

\begin{proof}
Let $\mathcal{H}$ be an $r$-uniform $n$-vertex linear hypergraph with no $k\cdot S_\ell^+$ subhypergraph. 
Let $A$ be the vertices in $\mathcal{H}$ of degree at least some fixed (large enough) constant $D=D(\ell,k,r)$. If $|A| \geq k$, then we can greedily embed $k$ pairwise vertex-disjoint copies of $S_\ell^+$ into $\mathcal{H}$. Thus, $|A| \leq k-1$.

Let $\mathcal{H}'$ be the $r$-uniform hypergraph resulting from the removal of the vertices of $A$ (and the hyperedges incident to them) from $\mathcal{H}$. The maximum degree in $\mathcal{H}'$ is less than $D$. If the average degree in $\mathcal{H}'$ is at least $\ell-\epsilon$ for any $\epsilon <1$, then by Lemma~\ref{ADL} we have $\Omega(n)$ vertices of degree at least $\ell$ in $\mathcal{H}'$. In this case we can greedily embed $k$ pairwise vertex-disjoint copies of $S_\ell^+$ into $\mathcal{H}'$, a contradiction. Therefore, the average degree in $\mathcal{H}'$ is at most $\ell-1$. Thus,
\[
|E(\mathcal{H'})| \leq \frac{\ell-1}{r}(n - |A|).
\]
 Let $B$ be the vertices $V(\mathcal{H}')=V(\mathcal{H}) -A$. 
 Now let us count the hyperedges of $\mathcal{H}$ that contain at least one vertex of $A$ and one vertex of $B$. Denote this collection of hyperedges by $E(A,B)$.
 To this end let us count the number of pairs $(h,\{x,y\})$ where $h$ is a hyperedge of $\mathcal{H}$ and $x$ is a vertex in $A \cap h$ and $y$ is a vertex in $B \cap h$. Fixing a hyperedge $h \in E(A,B)$ we have $|A \cap h|$ choices for $x$ and $|B\cap h|$ choices for $y$. Thus the number of pairs $(h,\{x,y\})$ is
 \[
 \sum_{h \in E(A,B)} |A \cap h||B \cap h| \geq \sum_{h \in E(A,B)} (r-1) = |E(A,B)|(r-1).
 \]
On the other hand, for a fixed $x$ and $y$ there is at most one hyperedge containing them as $\mathcal{H}$ is linear. Thus, the number of pairs $(h,\{x,y\})$ is at most
$|A|(n-|A|)$.
Combining these two estimates and solving for $|E(A,B)|$ gives
\[
|E(A,B)| \leq \frac{|A|}{r-1}(n-|A|).
\]
Finally, the maximum number of hyperedges contained completely in $A$ is at most $\binom{k-1}{2}/\binom{r}{2}$ as each pair of vertices in $A$ is contained in at most one hyperedge.
Therefore, the number of hyperedges in $\mathcal{H}$ is
\[
|E(\mathcal{H})| \leq \frac{\ell-1}{r}(n - |A|)+ \frac{|A|}{r-1}(n-|A|) + \frac{\binom{k-1}{2}}{\binom{r}{2}}.
\]
As $|A| \leq k-1$, we have that for $n$ large enough,
\[
|E(\mathcal{H})| \leq \left(\frac{\ell-1}{r}+\frac{k-1}{r-1}\right)(n-k+1) + \frac{\binom{k-1}{2}}{\binom{r}{2}}.
\]

Now let us give a construction that satisfies the sharpness assertion. 
Let $[r]^d$ denote the integer lattice formed by $d$-tuples from $\{1,2,\dots, r\}$. We can think of $[r]^d$ as a hypergraph in the following way: the collection of $d$-tuples that are fixed in all but one coordinate form a hyperedge. 
Thus $[r]^d$ is $r$-uniform and has $r^d$ vertices and $d \cdot r^{d-1}$ hyperedges. Observe that $[r]^d$ is linear as two hyperedges are either disjoint or intersect in exactly one vertex. Furthermore, every vertex is included in exactly $d$ hyperedges, so $[r]^d$ is $d$-regular. Finally, note that the hyperedges of $[r]^d$ can be partitioned into $d$ classes each of which forms a matching. This gives a natural proper hyperedge-coloring of $\mathcal{H}$. We call such a proper hyperedge-coloring a {\it canonical coloring}. See Figure~\ref{cube} for examples of $[r]^d$.

\begin{figure}[H]
\begin{center}
\includegraphics[scale=1.4]{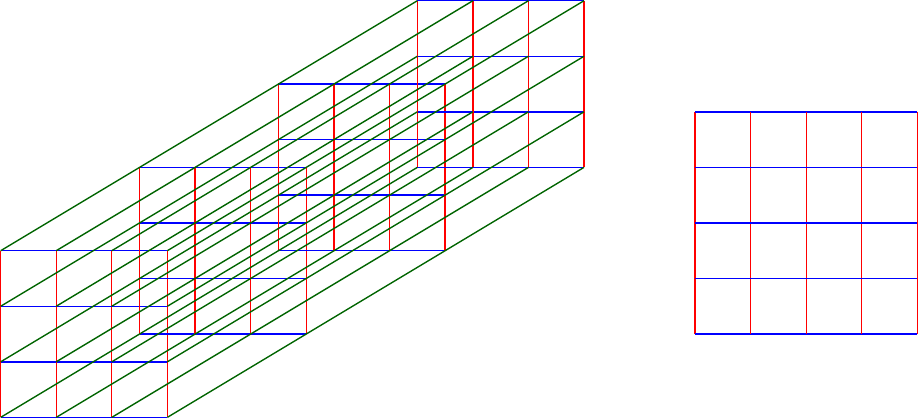}
\end{center}
\caption{The hypergraphs $[4]^3$ and $[5]^2$. Each hyperedge is represented by a line segment. The classes of parallel line segments form the color classes of a canonical coloring}\label{cube}
\end{figure}

The Cartesian product\footnote{Here we use the symbol $\times$ instead of $\square$ to denote the Cartesian product of hypergraphs.} of hypergraphs $\mathcal{H}$ and $\mathcal{G}$ 
is the hypergraph $\mathcal{H}\times \mathcal{G}$ on vertex set $V(\mathcal{H}) \times V(\mathcal{G})$ with hyperedge set
\[
E(\mathcal{H}\times \mathcal{G}) = \{ \{u\} \times e \mid u \in V(\mathcal{H}) \text{ and } e \in E(\mathcal{G})\} \cup \{ \{v\} \times f \mid v \in V(\mathcal{G}) \text{ and } f \in E(\mathcal{H})\}.
\]

Observe that if $\mathcal{H}$ is $r$-uniform and $\mathcal{G}$ is $s$-uniform, then the hyperedges in $\{ \{v\} \times f \mid v \in V(\mathcal{G}) \text{ and } f \in E(\mathcal{H})\}$ are of size $r$ and the hyperedges in $\{ \{u\} \times e \mid u \in V(\mathcal{H}) \text{ and } e \in E(\mathcal{G})\}$ are of size $s$. It is easy to see that if $\mathcal{H}$ and $\mathcal{G}$ are both linear, then $\mathcal{H} \times \mathcal{G}$ is linear. Indeed, two vertices $u_1 \times u_2$ and $v_1 \times v_2$ can be contained in a hyperedge if either $u_1 = u_2$ or $v_1 = v_2$. Observe that $u \times v_1$ and $u \times v_2$ are contained in a hyperedge if and only if $v_1$ and $v_2$ are contained in a hyperedge of $\mathcal{G}$.
 
Now suppose that $n-k+1$ is divisible by $(r-1)^{k-1}$ and $r^{\ell-1}$. Let us construct a hypergraph $\mathcal{H}^*$ as follows. The vertex set of $\mathcal{H}^*$ is partitioned into a set $A^*=\{a_1,a_2,\dots, a_{k-1}\}$ of $k-1$ vertices and  a set $B^*$ of $n-k+1$ vertices partitioned into distinct copies of $[r-1]^{k-1} \times [r]^{\ell-1}$. Let us assume that all hyperedges of size $r-1$ in the copies of $[r-1]^{k-1} \times [r]^{\ell-1}$ inherit their canonical hyperedge-coloring from the original hypergraph $[r-1]^{k-1}$.

The hyperedges of $\mathcal{H}^*$ consist of two types. The first type consist of all hyperedges of size $r$ in the copies of $[r]^{\ell-1}$ in $B^*$. The second type consists of every $h \cup \{a_i\}$ where $a_i \in A^*$ and $h$ is a hyperedge of color $i$ in a copy of $[r-1]^{k-1}$ with a canonical hyperedge-coloring. See Figure~\ref{h-star} for an illustration of this construction. Finally, depending on the size of $k-1$ compared to $r$ we may embed a negligible number of hyperedges into $A$ without violating the linear property of $\mathcal{H}^*$. However, we ignore these potential hyperedges in our construction\footnote{In fact, we can embed all $\binom{k-1}{2}/\binom{r}{2}$ hyperedges of a $2{\text -}(k-1,r,1)$ design into $A$ if the appropriate divisibility conditions are satisfied.}.

Let us confirm that $\mathcal{H}^*$ is linear. Two hyperedges of the first type are hyperedges from copies of the linear hypergraph $[r]^{\ell-1}$ and therefore intersect in at most one vertex. Two hyperedges of the second type are either the same color and therefore intersect in a vertex in $A^*$ but not in $B^*$ or are of different colors and may intersect in $B^*$ in one vertex, but do not intersect in $A^*$. Finally, a hyperedge of the first type and a hyperedge of the second type intersect in at most one vertex by the construction of $[r-1]^{k-1} \times [r]^{\ell-1}$.

\begin{figure}[H]
\begin{center}
{\includegraphics{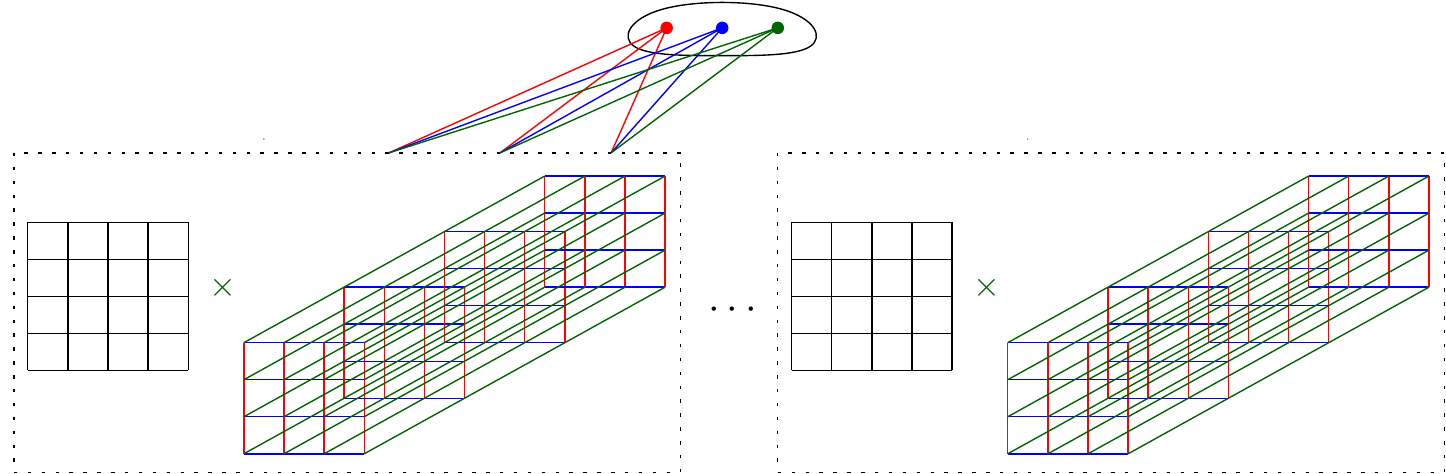}}
\end{center}
\caption{A construction of $\mathcal{H}^*$ for $k-1=3$ and $r=5$}\label{h-star}
\end{figure}

The number of hyperedges in $\mathcal{H}^*$ is 
\begin{align*}
|\mathcal{H}^*| =   |E(A^*,B^*)| + |E(B^*)| & \geq  (k-1) (r-1)^{k-2} \cdot \frac{n-k+1}{(r-1)^{k-1}} + (\ell-1) r^{\ell-2} \cdot\frac{n-k+1}{r^{\ell-1}}\\
& = \left(\frac{\ell-1}{r}+\frac{k-1}{r-1}\right)(n-k+1).
\end{align*}

It now remains to show that $\mathcal{H}^*$ contains no $k \cdot S_\ell^+$. Every vertex in $B^*$ is incident to exactly $\ell-1$ hyperedges that are contained completely in $B^*$. Therefore, any copy of $S_\ell^+$ in $\mathcal{H}^*$ must use at least one vertex from $A^*$. Therefore, there are at most $k-1$ pairwise vertex-disjoint copies of $S_\ell^+$.
\end{proof}

Theorem \ref{forest-linear} gives the following corollary for a matching in the linear setting.

\begin{cor}
Fix integers $k \geq 2$ and $r \geq 2$. Then for $n$ large enough,
\[
\operatorname{ex}_r^{\operatorname{lin}}(n,M_k^+)=\operatorname{ex}_r^{\operatorname{lin}}(n,k \cdot S_1^+) = \frac{k-1}{r-1}(n-k+1)+O(1).
\]
\end{cor}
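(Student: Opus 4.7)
The plan is to derive this corollary as a direct specialization of Theorem~\ref{forest-linear} with $\ell = 1$. First I would observe that $M_k^+$, a matching of $k$ pairwise disjoint $r$-edges, is literally the same hypergraph as $k \cdot S_1^+$, since a star with a single edge is just an edge. This justifies the first equality in the corollary without any work.

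For the upper bound, I would substitute $\ell = 1$ into Theorem~\ref{forest-linear}. The $\frac{\ell-1}{r}$ term vanishes, so the bound collapses to
\[
\ex_r^{\operatorname{lin}}(n, M_k^+) \leq \frac{k-1}{r-1}(n-k+1) + \frac{\binom{k-1}{2}}{\binom{r}{2}},
\]
and since $k$ and $r$ are fixed, the additive term $\binom{k-1}{2}/\binom{r}{2}$ is $O(1)$.

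For the matching lower bound, I would invoke the extremal construction $\mathcal{H}^*$ from the proof of Theorem~\ref{forest-linear} with $\ell = 1$. In that case $[r]^{\ell-1} = [r]^0$ collapses to a single vertex, so $B^*$ is partitioned into copies of $[r-1]^{k-1}$, there are no hyperedges of the ``first type'' (as $[r]^0$ has no edges), and the ``second type'' contributes exactly $(k-1)(r-1)^{k-2} \cdot \tfrac{n-k+1}{(r-1)^{k-1}} = \tfrac{k-1}{r-1}(n-k+1)$ hyperedges whenever $(r-1)^{k-1}$ divides $n-k+1$. For arbitrary $n$ one handles the non-divisible residue by trimming at most $(r-1)^{k-1} = O(1)$ vertices, which absorbs into the error term.

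Since this is a genuine corollary of an already-proved theorem, there is no real obstacle; the only minor point to address is making explicit that both the $\binom{k-1}{2}/\binom{r}{2}$ additive term in the upper bound and the divisibility/rounding loss in the construction are $O(1)$ when $k$ and $r$ are fixed, so the upper and lower bounds match to within $O(1)$ and yield the claimed equality.
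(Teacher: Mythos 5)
Your proposal is correct and matches the paper's intent: the paper states this corollary as an immediate specialization of Theorem~\ref{forest-linear} at $\ell=1$, with the upper bound read off directly and the lower bound coming from the construction $\mathcal{H}^*$, which for $\ell=1$ degenerates to disjoint copies of $[r-1]^{k-1}$ attached to the $k-1$ vertices of $A^*$. Your extra observation that the non-divisible residue and the $\binom{k-1}{2}/\binom{r}{2}$ term are absorbed into the $O(1)$ is exactly the small bookkeeping needed to turn the theorem's asymptotic sharpness into the stated $O(1)$-precision equality.
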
 

\section{Berge star forests}\label{berge-section}

A {\it system of distinct representatives} (or {\it SDR} for short) in a hypergraph $\mathcal{G}$ is a collection of $c$ distinct hyperedges $h_1,h_2,\dots, h_c$ and $c$ distinct vertices $x_1,x_2,\dots, x_c$ such that $x_i \in h_i$ for all $i$. We call $x_i$ the {\it representative} of $h_i$.
The {\it size}
of an SDR is the number of hyperedges. We will also use the term {\it $c$-SDR} to refer to an SDR of size $c$.

Let $\mathcal{G}$ be a Berge-$G$ and let $g$ be an arbitrary bijection from $\mathcal{G}$ to $G$ such that $g(h) \subset h$, i.e, $g$ is a bijection that establishes that $\mathcal{G}$ is a Berge-$G$. We call the graph formed by the image of $g$ a {\it skeleton} of $\mathcal{G}$. Thus, a skeleton of $\mathcal{G}$ is a copy of $G$ embedded into the vertex set of $\mathcal{G}$.

 Given a hypergraph $\mathcal{G}$ and a vertex $x$, the {\it link hypergraph} $\mathcal{G}_x$ is defined as
\[
\mathcal{G}_x = \{h -x \,| \, h \in \mathcal{G}, x \in h\}.
\]
Observe that for a vertex $x$ in a hypergraph $\mathcal{G}$, the existence of an $\ell$-SDR in the link hypergraph $\mathcal{G}_x$ corresponds exactly to a Berge-$S_\ell$ with center $x$ in $\mathcal{G}$ as the $x$ and the representatives of the $\ell$-SDR form the skeleton of a Berge-$S_\ell$. 

We begin with a two lemmas establishing degree conditions for the existence of a Berge-$S_\ell$.

\begin{lemma}\label{easy-sdr-lemma}
    Fix integers $r \geq 3$ and $\ell \leq r$ and let $\mathcal{G}$ be an $r$-uniform hypergraph. If $x$ is a vertex of degree $d(x) \geq \ell$ in $\mathcal{G}$, then there exists a Berge-$S_\ell$ with center $x$.
\end{lemma}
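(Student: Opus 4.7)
The plan is to use the correspondence, noted in the paragraph preceding the lemma, between Berge-$S_\ell$ subgraphs with center $x$ in $\mathcal{G}$ and $\ell$-SDRs in the link hypergraph $\mathcal{G}_x$. Since $d(x) \geq \ell$, the link $\mathcal{G}_x$ contains at least $\ell$ hyperedges, each of size $r-1$; these are pairwise distinct because $h_1 \neq h_2$ forces $h_1 - x \neq h_2 - x$. Fix any $\ell$ of them, say $h_1, h_2, \ldots, h_\ell$, set $V = h_1 \cup h_2 \cup \cdots \cup h_\ell$, and reduce the problem to producing an SDR for $\{h_1, \ldots, h_\ell\}$.

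I would deduce the existence of such an SDR via Hall's marriage theorem applied to the bipartite incidence graph between $\{h_1, \ldots, h_\ell\}$ and $V$. For any subfamily $S$ of size $s$, each $h \in S$ satisfies $h \subseteq N(S)$ and $|h| = r-1$, so $|N(S)| \geq r-1$. When $s \leq r-1$ Hall's condition is immediate. The only remaining case is $s = \ell = r$, where $|N(S)| \geq r$ is required. If on the contrary $|N(S)| = r-1$, then each of the $r$ hyperedges in $S$ is a size-$(r-1)$ subset of a set of size $r-1$, hence all of them coincide with $N(S)$, contradicting distinctness. Thus Hall's condition holds, an $\ell$-SDR exists in $\mathcal{G}_x$, and together with $x$ its representatives form the skeleton of the desired Berge-$S_\ell$ with center $x$.

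The only genuine obstacle is the boundary case $\ell = r$. A purely greedy assignment of representatives works for $i < r$, since at step $i$ the link $h_i$ has $r-1$ vertices and we have selected only $i-1 \leq r-2$ so far; but at $i = r$ the number of available candidates may be zero (precisely when $h_r$ equals the previously-chosen set of representatives). Hall's theorem sidesteps this difficulty by arguing globally, and the distinctness of the links of $x$ is exactly what rules out the worst case.
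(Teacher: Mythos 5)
Your proof is correct and follows essentially the same route as the paper: both pass to the link hypergraph $\mathcal{G}_x$, apply Hall's theorem to produce an $\ell$-SDR, and use the fact that the links are distinct $(r-1)$-sets (so any family of at least two of them spans at least $r \geq \ell$ vertices) to verify, or in the paper's case refute a violation of, Hall's condition. The only difference is presentational — you check Hall's condition directly case by case, while the paper assumes a violating subfamily and bounds its size — so no substantive gap or divergence.
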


\begin{proof}
We will show that $\mathcal{G}_x$ contains an $\ell$-SDR which implies that $\mathcal{G}$ contains a Berge-$S_\ell$ with center $x$. Suppose there is no $\ell$-SDR. Then by Hall's theorem, there exists a collection of hyperedges $h_1,\dots, h_t$ with $1 \leq t \leq \ell$ such that
\[
\left|\bigcup_{i=1}^t h_i\right| < t.
\]
Each $h_i$ contains $r-1$ vertices, so $\left|\cup h_i\right| \geq r-1 \geq 2$. This implies that $t \geq 2$. As any pair of hyperedges span at least $r$ vertices we have $\left|\cup h_i\right| \geq r \geq \ell$. This implies that $t > \ell$, a contradiction.
\end{proof}

The next lemma deals with the case when $\ell>r$.

\begin{lemma}\label{sdr-lemma}
	Fix integers $\ell > r\geq 3$ and let $\mathcal{G}$ be an $r$-uniform hypergraph.
	
	\begin{enumerate}
		\item[{\bf (1)}] If $x$ is a vertex of degree $d(x) >\binom{\ell-1}{r-1}$, then 
		there exists a Berge-$S_\ell$ with center $x$.
		
		\item[{\bf (2)}] If $x$ is a vertex of degree $d(x) = \binom{\ell-1}{r-1}$ such that the neighborhood of $x$ contains at least $\ell$ vertices, then there exists a Berge-$S_\ell$ with center $x$.
	\end{enumerate}

\end{lemma}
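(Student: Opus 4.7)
The plan is to translate each part into a question about the link hypergraph $\mathcal{G}_x$: as noted just before Lemma~\ref{easy-sdr-lemma}, a Berge-$S_\ell$ with center $x$ corresponds exactly to an $\ell$-SDR in $\mathcal{G}_x$, so it suffices to produce such an SDR in each case. I would argue by contradiction: suppose $\mathcal{G}_x$ has no $\ell$-SDR, and consider the bipartite incidence graph $B$ with sides $E(\mathcal{G}_x)$ and $V(\mathcal{G}_x)$. Since the maximum matching in $B$ has size at most $\ell - 1$, K\"onig's theorem yields a vertex cover consisting of a set $S' \subseteq E(\mathcal{G}_x)$ and a set $U \subseteq V(\mathcal{G}_x)$ with $|S'| + |U| \leq \ell - 1$. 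Every hyperedge outside $S'$ is then a subset of $U$, so
\[
d(x) \;\leq\; |S'| + \binom{|U|}{r-1}.
\]

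Writing $s = |S'|$ and $u = |U|$, the remaining step is to maximize $s + \binom{u}{r-1}$ subject to $s + u \leq \ell - 1$ and $s,u \geq 0$. Substituting $s = \ell - 1 - u$ reduces this to analyzing $g(u) = (\ell - 1 - u) + \binom{u}{r-1}$ for $0 \leq u \leq \ell - 1$. Since the forward difference $g(u+1) - g(u) = \binom{u}{r-2} - 1$ is non-positive for $u \leq r - 2$ and strictly positive for $u \geq r - 1$, the maximum is attained at one of the endpoints $u = 0$ or $u = \ell - 1$. Comparing $g(0) = \ell - 1$ with $g(\ell - 1) = \binom{\ell-1}{r-1}$ and using the elementary estimate $\binom{\ell-1}{r-1} \geq \ell - 1$, which holds for $r \geq 3$ and $\ell > r$ (with strict inequality once $\ell \geq r+2$), we conclude $d(x) \leq \binom{\ell-1}{r-1}$.

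Part~(1) is then immediate, since this bound directly contradicts the assumption $d(x) > \binom{\ell-1}{r-1}$. For part~(2), the equality $d(x) = \binom{\ell-1}{r-1}$ forces the optimizer $(s,u) = (0,\ell-1)$; in particular every hyperedge of $\mathcal{G}_x$ is contained in the $(\ell-1)$-element set $U$, so the neighborhood of $x$ in $\mathcal{G}$ satisfies $N(x) \subseteq U$ and hence $|N(x)| \leq \ell - 1$, contradicting the hypothesis that $x$ has at least $\ell$ neighbors.

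The main technical point I expect will be the optimization step, and in particular verifying that $(s,u) = (0,\ell-1)$ is the \emph{unique} maximizer under the hypotheses, so that the equality case in part~(2) genuinely pins down the structure of $U$ and lets the neighborhood hypothesis do its work. This relies on the strict inequality $\binom{\ell-1}{r-1} > \ell - 1$ for $\ell \geq r+2$, and it is worth checking the boundary case $\ell = r + 1$ separately, where $g(0) = g(\ell-1)$ and the K\"onig bound does not single out the right extremizer.
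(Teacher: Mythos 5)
Your argument is a genuinely different route from the paper's. The paper works directly with a maximum SDR in the link $\mathcal{G}_x$: assuming its size is $c<\ell$, it picks a hyperedge $f$ outside the SDR, shows all such $f$ lie inside the representative set, and runs an exchange/augmentation case analysis to contradict $|E(\mathcal{G}_x)|\geq\binom{\ell-1}{r-1}$. You instead pass to the bipartite incidence graph, apply K\"onig's theorem to get a cover $S'\cup U$ with $|S'|+|U|\leq\ell-1$, and reduce everything to the optimization $d(x)\leq s+\binom{u}{r-1}$ subject to $s+u\leq\ell-1$. Your endpoint analysis is correct (the forward difference $\binom{u}{r-2}-1$ computation is fine, and interior points are strictly below the larger endpoint for $r\geq3$), so part~(1) follows for all $\ell>r\geq3$; and for $\ell\geq r+2$ we have $\binom{\ell-1}{r-1}>\ell-1$, so in part~(2) equality indeed forces $(s,u)=(0,\ell-1)$ with every hyperedge of $\mathcal{G}_x$ inside $U$, whence $|N(x)|\leq\ell-1$, a contradiction. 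This deficiency-form argument is arguably cleaner than the paper's case analysis, and in the equality case it even hands you the structural fact that the link is $K_{\ell-1}^{r-1}$ (the content of Lemma~\ref{3lem}) for free.

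The boundary case $\ell=r+1$ that you flagged is not something you can close, because part~(2) is actually false there: $\binom{\ell-1}{r-1}=\binom{r}{r-1}=r=\ell-1$, so a vertex satisfying the hypothesis lies in only $\ell-1$ hyperedges, while any Berge-$S_\ell$ with center $x$ requires $\ell$ distinct hyperedges all containing $x$. Concretely, for $r=3$, $\ell=4$, take the three hyperedges $\{x,a,b\},\{x,c,d\},\{x,e,f\}$: then $d(x)=3=\binom{3}{2}$ and $|N(x)|=6\geq4$, yet no Berge-$S_4$ centered at $x$ exists. The paper's own proof of Lemma~\ref{sdr-lemma} has the same blind spot: it selects ``an arbitrary hyperedge $f$ distinct from $f_1,\dots,f_c$,'' which need not exist when $|E(\mathcal{G}_x)|=\binom{\ell-1}{r-1}=\ell-1=c$, and its final counting bound $(\ell-1)+\binom{\ell-3}{r-1}<\binom{\ell-1}{r-1}$ also fails to be strict exactly at $\ell=r+1$. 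So your instinct to isolate that case was right; the honest resolution is to state part~(2) only for $\ell\geq r+2$ (equivalently, to note it is vacuous or false at $\ell=r+1$), and with that restriction your proof is complete and proves everything that is actually true.
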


\begin{proof}
	Consider the $(r-1)$-uniform link hypergraph $\mathcal{G}_x$. By the degree condition on $d(x)$ in {\bf (1)} or {\bf (2)} we have $|E(\mathcal{G}_x)| \geq \binom{\ell-1}{r-1}$.
	Suppose the maximum SDR is of size $c < \ell$. Let $f_1,f_2,\dots, f_c$ be the hyperedges of a $c$-SDR and let $S$ be the representatives. Let $f$ be an arbitrary hyperedge distinct from $f_1,\dots, f_c$. Observe that $f$ is contained in $S$ as otherwise we can easily form a $(c+1)$-SDR with $f$ which contradicts the maximality of $c$. Therefore, every hyperedge not part of the $c$-SDR must be contained in $S$. By condition {\bf (1)} or {\bf (2)} we have at least one $f_i$, say $f_1$, that is not contained in $S$.
	Let $y \in f_1 \setminus S$ and $z \in S$ be the representative of $f_1$.
	If $f$ contains $z$, then we can form a $(c+1)$-SDR by changing the representative of $f_1$ to $y$ and allowing $z$ to be the representative of $f$, a contradiction. Thus, $f$ does not contain $z$.
	
	We distinguish two cases.
	
	\bigskip
	
	{\bf Case 1:}  $f_1$ is the only hyperedge of the $c$-SDR not contained in $S$.
	
	\bigskip
	
	If no hyperedge different from $f_1$ is incident to $z$, then
     the number of hyperedges is at most
	\[
	1+\binom{|S \setminus \{z\}|}{r-1} \leq 1 + \binom{\ell-2}{r-1} < \binom{\ell-1}{r-1}
	\]
	which contradicts the bound on $|E(\mathcal{G}_x)|$.
	If there is a hyperedge (other than $f_1$) incident to $z$, then it must be a hyperedge of the $c$-SDR. So suppose $f_2$ is incident to $z$. Now if $f$ is incident to the representative $z'$ of $f_2$, then we can form a larger SDR by allowing $z'$ to be the representative of $f$, $z$ to be the representative of $f_2$ and $y$ to be the representative of $f_1$, a contradiction. Therefore, $f$ is disjoint from $z$ and $z'$. Then the number of hyperedges is at most
	\[
	c + \binom{|S \setminus \{z,z'\}|}{r-1} \leq (\ell-1) + \binom{\ell-3}{r-1} < \binom{\ell-1}{r-1}
	\]
	which contradicts the bound on $|E(\mathcal{G}_x)|$.
	
	\bigskip
	
	{\bf Case 2:} There are at least two hyperedges of the $c$-SDR, say $f_1$ and $f_2$, not contained in $S$.
	
	\bigskip
	
	If $z$ and $z'$ are the representatives of $f_1$ and $f_2$, respectively, then $f$ cannot contain $z$ or $z'$ as otherwise we can form a larger SDR. Therefore, the number of hyperedges is at most
	\[
	c + \binom{|S \setminus \{z,z'\}|}{r-1} \leq (\ell-1) + \binom{\ell-3}{r-1} < \binom{\ell-1}{r-1}
	\]
	which contradicts the bound on $|E(\mathcal{G}_x)|$.
\end{proof}	
	



\begin{lemma}\label{3lem}
	Fix integers $\ell>r \geq 3$ and 
	suppose $\mathcal{G}$ is a Berge-$S_\ell$-free $r$-uniform hypergraph. 
	If $x$ is a vertex of degree $d(x)=\binom{\ell-1}{r-1}$, then the link hypergraph $\mathcal{G}_x$ is a $K_{\ell-1}^{r-1}$.
\end{lemma}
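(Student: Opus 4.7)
The proof I have in mind is very short and follows almost immediately from Lemma~\ref{sdr-lemma}(2). The plan is to apply the contrapositive of that lemma to the vertex $x$, use it to bound the number of vertices in the neighborhood of $x$, and then observe that the degree hypothesis forces the link hypergraph to saturate that vertex bound.

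First I would argue that the neighborhood $N(x)$ (the set of vertices lying in a common hyperedge with $x$, which is exactly $V(\mathcal{G}_x)$) satisfies $|N(x)| \leq \ell - 1$. Indeed, $\mathcal{G}$ is Berge-$S_\ell$-free, so in particular there is no Berge-$S_\ell$ with center $x$; then part (2) of Lemma~\ref{sdr-lemma}, applied to $x$ with $d(x) = \binom{\ell-1}{r-1}$, forces $|N(x)| < \ell$.

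Next I would use the degree hypothesis combinatorially. Since distinct hyperedges of $\mathcal{G}$ containing $x$ give rise to distinct $(r-1)$-subsets of $N(x)$, the link $\mathcal{G}_x$ is a simple $(r-1)$-uniform hypergraph with exactly $\binom{\ell-1}{r-1}$ hyperedges on a vertex set of size at most $\ell-1$. Because the total number of $(r-1)$-subsets of an $m$-element set is $\binom{m}{r-1}$, which is strictly increasing in $m$ for $m \geq r-1$, the only way to fit $\binom{\ell-1}{r-1}$ distinct $(r-1)$-hyperedges into a set of size at most $\ell-1$ is to have $|V(\mathcal{G}_x)| = \ell - 1$ and to take every $(r-1)$-subset as a hyperedge. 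Hence $\mathcal{G}_x = K_{\ell-1}^{r-1}$, which is exactly the conclusion.

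Given that both ingredients (the SDR-based lemma and the counting bound) are already in hand, there is essentially no obstacle; the only point requiring a word of care is observing that the link $\mathcal{G}_x$ is simple, i.e., that distinct hyperedges of $\mathcal{G}$ through $x$ produce distinct $(r-1)$-subsets in the link, which is immediate since $\mathcal{G}$ itself is a simple hypergraph.
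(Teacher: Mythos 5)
Your proof is correct and is essentially the paper's argument run in the contrapositive direction: the paper assumes $\mathcal{G}_x \neq K_{\ell-1}^{r-1}$, deduces $|N(x)| \geq \ell$ by the same counting observation, and then invokes Lemma~\ref{sdr-lemma}(2) to get a contradiction, while you invoke Lemma~\ref{sdr-lemma}(2) first to bound $|N(x)| \leq \ell-1$ and then conclude by counting. The two are logically identical, and your remark that the link is simple is the same (correct) observation the paper uses implicitly.
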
 

\begin{proof}
	Suppose that the link hypergraph $\mathcal{G}_x$ is not a $K_{\ell-1}^{r-1}$, then the neighborhood of $x$ must contain more than $\ell-1$ vertices as $d(x)=\binom{\ell-1}{r-1}$.
	Now we may apply Lemma~\ref{sdr-lemma} to find a Berge-$S_\ell$ in $\mathcal{G}$, a contradiction.
\end{proof}

We begin with a simple application of Lemma~\ref{sdr-lemma} which was proved in \cite{GMP}. 

\begin{theorem}[Gerbner, Methuku and Palmer \cite{GMP}]\label{berge-sk}
	Fix integers $\ell\geq 1$ and $r\geq 2$.
	\begin{enumerate}
		\item[{\bf (1)}] If $\ell>r$, then
		\[
		\operatorname{ex}_r(n, \textup{Berge-}S_\ell) \leq \binom{\ell}{r}\frac{n}{\ell}.
		\]
		Furthermore, this bound is sharp whenever $\ell$ divides $n$.
		
		\item[{\bf (2)}] 	If $\ell\leq r$, then
		\[
		\ex_r(n,\textup{Berge-}S_\ell) \leq \frac{\ell-1}{r}n.
		\]
		Furthermore, this bound is sharp whenever $r$ divides $n$.
		
	\end{enumerate}

\end{theorem}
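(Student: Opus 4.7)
The plan is to prove both parts by extracting a degree bound from the SDR lemmas already established and then summing degrees, followed by standard disjoint-copies constructions for sharpness.

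For part \textbf{(2)} the hypothesis $\ell \le r$ allows us to invoke Lemma~\ref{easy-sdr-lemma} directly: every vertex of degree at least $\ell$ is the center of a Berge-$S_\ell$. Hence in any Berge-$S_\ell$-free $r$-uniform hypergraph $\mathcal{H}$, every vertex has degree at most $\ell-1$. Summing the degrees then gives
\[
r|E(\mathcal{H})| = \sum_{v \in V(\mathcal{H})} d(v) \le (\ell-1)n,
\]
which rearranges to the stated bound. For sharpness when $r \mid n$, I would take $\ell-1$ pairwise edge-disjoint perfect matchings on $n$ vertices, producing $(\ell-1)n/r$ hyperedges and leaving every vertex with degree exactly $\ell-1$, so (again by Lemma~\ref{easy-sdr-lemma}, or just by the pigeonhole count $\ell-1 < \ell$ on hyperedges through any vertex) no Berge-$S_\ell$ can occur.

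For part \textbf{(1)} the hypothesis $\ell > r$ is the setting of Lemma~\ref{sdr-lemma}(1): any vertex of degree strictly greater than $\binom{\ell-1}{r-1}$ is the center of a Berge-$S_\ell$. So in a Berge-$S_\ell$-free hypergraph every vertex has degree at most $\binom{\ell-1}{r-1}$. Summing the degrees,
\[
r|E(\mathcal{H})| \le \binom{\ell-1}{r-1}\, n,
\]
and the standard identity $\binom{\ell-1}{r-1}/r = \binom{\ell}{r}/\ell$ converts this to $|E(\mathcal{H})| \le \binom{\ell}{r}\, n/\ell$, as desired.

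For the matching construction when $\ell \mid n$, I would take $n/\ell$ vertex-disjoint copies of the complete $r$-uniform hypergraph $K_\ell^r$, giving exactly $\binom{\ell}{r}\, n/\ell$ hyperedges. To verify this hypergraph is Berge-$S_\ell$-free, I would observe that any Berge-$S_\ell$ with some center $x$ must lie entirely inside one component of the construction (since the components are vertex-disjoint), so its $\ell$ hyperedges all come from one $K_\ell^r$ and all contain $x$. The link of $x$ in that $K_\ell^r$ is $K_{\ell-1}^{r-1}$, which only has $\ell-1$ vertices available as representatives, so no $\ell$-SDR exists in the link and hence no Berge-$S_\ell$ is centered at $x$. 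The only slightly delicate point in the whole argument is this sharpness check for part (1); everything else is a direct deduction from the lemmas already in hand, so I do not expect any serious obstacle.
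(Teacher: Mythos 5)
Your proof is correct and follows essentially the same route as the paper: both parts come from the degree bounds supplied by Lemma~\ref{easy-sdr-lemma} and Lemma~\ref{sdr-lemma} together with a handshake/averaging count, and the sharpness examples (disjoint copies of $K_\ell^r$, and an $(\ell-1)$-regular $r$-uniform hypergraph, which is exactly what $\ell-1$ pairwise edge-disjoint perfect matchings give) are the paper's constructions. The only cosmetic differences are that the paper phrases the upper bound in~(1) as an average-degree argument rather than a maximum-degree bound, and it exhibits the $\ell-1$ edge-disjoint matchings explicitly via shifted intervals on a cycle (needing $n>r$), whereas you assert their existence; your link-hypergraph verification that each $K_\ell^r$ is Berge-$S_\ell$-free is fine.
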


\begin{proof}
	For the sharpness assertion in Case {\bf (1)} consider the $r$-uniform hypergraph consisting of $\frac{n}{\ell}$ pairwise disjoint complete hypergraphs $K_\ell^r$. A Berge-$S_\ell$ necessarily contains at least $r+1$ vertices (for $\ell\geq 2$), so no such complete hypergraph contains a Berge-$S_\ell$. The number of hyperedges in this construction is exactly $\frac{n}{\ell} \binom{\ell}{r}$. Note that when $\ell$ does not divide $n$ we may still construct many disjoint copies of $K_\ell^r$ and an additional smaller complete hypergraph.
	
	For the upper bound in Case {\bf (1)} suppose that $\mathcal{H}$ is an $r$-uniform $n$-vertex hypergraph with $|E(\mathcal{H})| > \frac{n}{\ell}\binom{\ell}{r}$ hyperedges. The average degree of $\mathcal{H}$ is 
	\[
	d(\mathcal{H}) > \frac{r}{n}\binom{\ell}{r}\frac{n}{\ell} = \binom{\ell-1}{r-1}.
	\]
	This implies that $\mathcal{H}$ contains a vertex $x$ of degree greater than $\binom{\ell-1}{r-1}$.
	Applying Lemma~\ref{sdr-lemma} to the hyperedges incident to $x$ gives a Berge-$S_\ell$ in $\mathcal{H}$, a contradiction.

	The upper bound in Case {\bf (2)} follows directly Lemma~\ref{easy-sdr-lemma}. The sharpness assertion follows from the the fact that  $r$-uniform $(\ell-1)$-regular $n$-vertex hypergraphs exist when $r$ divides $n$ and $n>r$. We give a concrete example here. Arrange $n$ vertices around a circle. By considering intervals of $r$ consecutive vertices we can partition the $n$ vertices into $n/r$ classes each of size $r$. If we begin this partition from different starting vertices we may create $\ell-1<r$ different partitions such that no pair of partitions has a class in common. It is easy to see that this collection of $(\ell-1)\frac{n}{r}$ total partition classes forms an $r$-uniform $(\ell-1)$-regular hypergraph on $n$ vertices.
\end{proof}

We now begin our proof of Theorem~\ref{bergekS}. We first consider the case when the uniformity $r$ is large. 

\begin{theorem}\label{large-r}
Fix integers $\ell,k \geq 1$ and $r \geq \ell+k-1$. Then for $n$ large enough,
	\[
	\ex_r(n,\textup{Berge-}k \cdot S_{\ell}) \leq \frac{\ell-1}{r-k+1}(n-k+1).
	\]
	Furthermore, this bound is sharp whenever $r-k+1$ divides $n-k+1$.
\end{theorem}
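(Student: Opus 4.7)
The plan is to establish the bound by induction on $k$, proving the structural statement that any Berge-$k \cdot S_\ell$-free $r$-uniform hypergraph $\mathcal{H}$ on $n$ vertices (with $r \geq \ell + k - 1$ and $n$ large enough) satisfies $|A| \leq k - 1$ where $A = \{v \in V(\mathcal{H}) : d(v) \geq \ell\}$. From this, a short double-counting argument yields the desired bound: every vertex in $B := V(\mathcal{H}) \setminus A$ has degree at most $\ell - 1$, so
\[
|E(\mathcal{H})|(r - |A|) \leq \sum_{h \in E(\mathcal{H})}|h \cap B| = \sum_{v \in B} d(v) \leq (\ell - 1)(n - |A|),
\]
and since $(n-|A|)/(r-|A|)$ is monotonically increasing in $|A|$ for $n > r$, the worst case $|A| = k - 1$ gives $|E(\mathcal{H})| \leq \frac{\ell-1}{r-k+1}(n-k+1)$.

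The base case $k = 1$ is immediate: any vertex of degree $\geq \ell$ centers a Berge-$S_\ell$ by Lemma~\ref{easy-sdr-lemma} (using $r \geq \ell$), so Berge-$S_\ell$-freeness forces $|A| = 0$. For the inductive step, I assume the claim for $k-1$ and suppose for contradiction that $|A| \geq k$ in a Berge-$k \cdot S_\ell$-free hypergraph. By the contrapositive of the inductive hypothesis, $\mathcal{H}$ contains a Berge-$(k-1) \cdot S_\ell$; I select such an instance whose $(k-1)\ell$ rays are chosen from $B$ (possible because $r \geq \ell+k-1$ guarantees that every hyperedge meeting a center has enough room outside $A$). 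With this choice, $U \cap A$ consists precisely of the $k-1$ centers, so $|A \setminus U| \geq |A| - (k-1) \geq 1$, and I may pick $x \in A \setminus U$.

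The key step is to build a Berge-$S_\ell$ centered at $x$ whose skeleton is disjoint from $U$, producing the contradicting Berge-$k \cdot S_\ell$. This reduces to finding an $\ell$-SDR in the link $\mathcal{G}_x$ with representatives outside $U$. A codegree count bounds the number of hyperedges of $\mathcal{G}_x$ that meet $U$ by $(k-1)(\ell+1)\binom{n-2}{r-2}$, while $d(x) \geq \ell$. To ensure enough hyperedges survive, I argue that $x$ can be reselected so that $d(x)$ is near the maximum of $A$; since $\sum_{v \in A} d(v) = r|E(\mathcal{H})| - \sum_{v \in B} d(v) \geq r|E(\mathcal{H})| - (\ell-1)(n - |A|)$ and $|E(\mathcal{H})|$ exceeds $\frac{\ell-1}{r-k+1}(n-k+1)$ (the only case where a contradiction is needed), the degree sum over $A$ is substantial for $n$ large. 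The condition $r \geq \ell+k-1$ is then used to ensure each surviving hyperedge of the link retains at least $\ell - 1$ vertices outside $U$, so Lemma~\ref{easy-sdr-lemma} applied to the $(r-1)$-uniform restricted link produces the required SDR.

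For the sharpness assertion, when $r - k + 1$ divides $n - k + 1$, I construct $\mathcal{H}^*$ as follows: take $V(\mathcal{H}^*) = A^* \sqcup B^*$ with $|A^*| = k - 1$ and $|B^*| = n - k + 1$, partition $B^*$ into $\frac{n-k+1}{r-k+1}$ parts of size $r - k + 1$ in $\ell - 1$ different ways so that each vertex of $B^*$ lies in exactly $\ell - 1$ parts across all partitions, and take the hyperedge set to be all sets $A^* \cup P$ where $P$ is a part in some partition. Then $|E(\mathcal{H}^*)| = (\ell-1)\cdot\frac{n-k+1}{r-k+1}$ and, since every vertex of $B^*$ has degree exactly $\ell - 1$, only the $k - 1$ vertices of $A^*$ can serve as centers of a Berge-$S_\ell$, so $\mathcal{H}^*$ is Berge-$k \cdot S_\ell$-free. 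The main obstacle throughout this plan is the SDR extension step: controlling the interplay between $d(x)$, the codegrees $d(\{x,v\})$ for $v \in U$, and the structure of the link hypergraph in the boundary regime $r = \ell+k-1$ where the slack in each hyperedge is minimal.
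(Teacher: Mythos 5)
Your sharpness construction is fine and essentially the paper's, but the upper bound rests on a false structural claim. You assert that every Berge-$k\cdot S_\ell$-free $r$-uniform hypergraph has at most $k-1$ vertices of degree at least $\ell$ (with $A$ defined by the threshold $\ell$), and you run an induction whose hypothesis is exactly this statement for $k-1$. For every $k\geq 2$ this is false: take $\ell$ hyperedges of size $r$ all containing a fixed set of $k$ vertices (possible since $r\geq \ell+k-1\geq k$), with all other vertices isolated. This hypergraph has only $\ell<k\ell$ hyperedges, so it is trivially Berge-$k\cdot S_\ell$-free, yet it has $k$ vertices of degree exactly $\ell$; adding isolated vertices makes it an $n$-vertex counterexample for every large $n$. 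Consequently the contrapositive of your inductive hypothesis is not available, and the implication ``$|A|\geq k$ forces a Berge-$k\cdot S_\ell$'' that your contradiction needs simply does not hold at this degree threshold.

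The failure also shows up concretely in your SDR extension step: the chosen $x\in A\setminus U$ may have degree exactly $\ell$, and all $\ell$ of its hyperedges may already be hyperedges of, or meet the skeleton of, the Berge-$(k-1)\cdot S_\ell$; your codegree bound $(k-1)(\ell+1)\binom{n-2}{r-2}$ on bad hyperedges is of order $n^{r-2}$ and cannot be beaten by $d(x)\geq \ell$, which is a constant. Your proposed repair --- reselecting $x$ of near-maximum degree using the edge-count assumption --- is precisely the admission that the threshold must be a large constant $D=D(\ell,k,r)$ rather than $\ell$, and that is the paper's actual route: it defines $A$ as the vertices of degree greater than $D$, proves $|A|\leq k-1$ by a greedy construction of stars exploiting the huge degrees (so previously used hyperedges and skeleton vertices can be avoided), and then controls $B$ not vertex-by-vertex but via an average-degree argument (degree at most $\ell-1$ when $|A|=k-1$, average at most $\ell-1+\epsilon$ when $|A|<k-1$, using ``far'' pairs of vertices to assemble disjoint Berge-$S_\ell$'s) before closing with essentially the same double count you propose. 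To make your plan work you would need to restrict all claims to hypergraphs exceeding the edge bound and replace the threshold-$\ell$ set $A$ by the high-degree set, at which point the argument coincides with the paper's rather than giving an alternative proof.
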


\begin{proof}
Let us begin with the lower bound. We construct a hypergraph $\mathcal{H}^*$ as follows. Consider an $(\ell-1)$-regular $(r-k+1)$-uniform hypergraph on a vertex set $B^*$ of size $n-k+1$. Such a hypergraph exists as $r-k+1$ divides $n-k+1$ and $r \geq \ell-1$. Add a fixed set $A^*$ of $k-1$ vertices to each hyperedge to form an $r$-uniform hypergraph on $n$ vertices. Each vertex in $B^*$ has degree at most $\ell-1$, so the skeleton of any Berge-$S_\ell$ must use at least one vertex from $A^*$. Therefore, $\mathcal{H^*}$ is Berge-$k \cdot S_\ell$-free and has
\[
|E(\mathcal{H}^*)| = \frac{\ell-1}{r-k+1}(n-k+1).
\]

Now we prove the upper bound.
Let $\mathcal{H}$ be an $r$-uniform $n$-vertex hypergraph with no Berge-$k\cdot S_\ell$.
	 Let $A$ be the set of vertices in $\mathcal{H}$ of degree greater than some fixed (large enough) constant $D=D(\ell,k,r)$ and let $B = V(\mathcal{H})-A$ be the remaining vertices. Put $c=|A|$.
	 
	 \begin{claim}\label{c-claim}
	 	$c \leq k-1$.
	 \end{claim}
	 
	 \begin{proof}
	 Suppose $c \geq k$ and let $A' \subseteq A$ be a set of $k$ vertices of degree at least $D$.
	 As $D$ is large enough, it is easy to see that there is a Berge-$S_\ell$ with center in $A'$ but whose skeleton is otherwise disjoint from $A'$. 
	 Now suppose that we have found a Berge-$(k-1) \cdot S_\ell$ whose skeleton intersects $A'$ in $(k-1)$ vertices (namely, only the centers of each star in the skeleton is in $A'$). This Berge-$(k-1) \cdot S_\ell$ has $(k-1)\ell$ hyperedges and its skeleton spans $(k-1)(\ell+1)$ vertices.
	 Let $x$ be the vertex in $A'$ not in the skeleton of this Berge-$(k-1) \cdot S_\ell$. Let us remove the hyperedges of the Berge-$(k-1) \cdot S_\ell$ from $\mathcal{H}$.
	 As $D$ is large enough, Lemma~\ref{sdr-lemma} implies that among the remaining hyperedges there is Berge-$S_{(k-1)(\ell+1)+\ell}$, denoted $\mathcal{S}$, with center $x$. Thus $\mathcal{S}$ is hyperege-disjoint from the Berge-$(k-1) \cdot S_\ell$. However, the skeleton of both of these subhypergraphs may intersect.
	 At most $(k-1)(\ell+1)$ vertices of the skeleton of $\mathcal{S}$ are shared with the skeleton of the Berge-$(k-1) \cdot S_\ell$. Therefore, there is a Berge-$S_\ell$ whose skeleton is disjoint from the skeleton of the Berge-$(k-1) \cdot S_\ell$. In particular, we have a Berge-$k \cdot S_\ell$, a contradiction.
	  \end{proof}
	  
	 \begin{claim}\label{b-avg-deg}
	 If $c=k-1$, then each vertex in $B$ has degree at most $\ell-1$. If $c<k-1$, then for any $\epsilon>0$ and $n$ large enough
	     the average degree of the vertices in $B$ is at most $\ell-1+\epsilon$.
	 \end{claim}
	 
	 \begin{proof}
	    First consider the case when $c=k-1$ and suppose there is a vertex $x$ in $B$ of degree at least $\ell$. Let us remove vertices from the hyperedges incident to $x$ so that they are disjoint from $A$ and of size exactly $\ell$. This is possible as $r \geq \ell+k-1$. Now, by Lemma~\ref{easy-sdr-lemma}, there is a Berge-$S_\ell$ with center $x$ among these $\ell$-uniform hyperedges. The skeleton of this Berge-$S_\ell$ is necessarily contained in $B$. Clearly this Berge-$S_\ell$ corresponds to a Berge-$S_\ell$, denoted $\mathcal{S}$, in $\h$ whose skeleton is contained in $B$.
	   Now, as the degrees of the vertices in $A$ are large enough, we can construct a Berge-$(k-1) \cdot S_\ell$ whose hyperedges and skeleton are disjoint from those of $\mathcal{S}$. Therefore, $\h$ contains Berge-$k\cdot S_\ell$, a contradiction.
	     
	     Now consider the case when  $c<k-1$ and suppose that the average degree of the vertices in $B$ is $\ell-1+\epsilon$ for some absolute constant $0 < \epsilon \leq 1$.
	     Thus, the sum of degrees of the vertices in $B$ is $(\ell-1+\epsilon)(n-c)$.
	     Let $s$ be the number of vertices of degree at most  $\ell-1$ in $B$. Recall that the vertices in $B$ have degree at most $D$.
	Thus, the sum of degrees in $B$ is at most $(\ell-1) s + D(n-c-s)$. Combining these estimates and solving for $s$ gives
	\[
	s\leq\frac{D-\ell+1 -\epsilon}{D-\ell+1 }(n-c) = (1-\epsilon')(n-c)
	\]
	for some $\epsilon'>0$ depending only on $\epsilon, \ell, k$ and $r$.
	Therefore, the number of vertices of degree at least $\ell$ is $\epsilon' n = \Omega(n)$.
	
	Let us call a pair of vertices $x,y \in B$  {\it far} if they do not share a common neighbor in $B$ (they may still have a common neighbor in $A$). As the vertices in $B$ have constant maximum degree we can find a subset $B'$ of size $\Omega(n)$ such that all vertices have degree at least $\ell$ and all pairs of vertices are far.

    For each vertex $x$ in $B'$ there is a Berge-$S_{\ell}$ with center $x$. The hyperedges of this Berge-$S_\ell$ may intersect $A$.
    However, as $r \geq \ell+k-1 > \ell + c$ we can find a skeleton of this Berge-$S_\ell$ that does not include a vertex of $A$.
     As any two vertices in $B'$ do not share a common neighbor in $B$, we have a collection of hyperedge-disjoint copies of a Berge-$S_\ell$. As $n$ is large enough we can find a Berge-$k \cdot S_{\ell}$, a contradiction.
	 \end{proof}
	 
	 Now let us estimate the number of hyperedges in $\mathcal{H}$. First observe that 
	 \[
	 \sum_{x \in A} d(x) \leq c|E(\mathcal{H})|
	 \]
	 as each hyperedge incident to $A$ is counted at most $c=|A|$ times.
	 Let $d(B)$ be the average degree of the vertices in $B$ and observe that
	 \begin{align*}
	 r|E(\mathcal{H})| & = \sum_{x \in B} d(x) + \sum_{x \in A} d(x) \\
	 & \leq d(B)(n-c) + c |E(\mathcal{H})|.
	 \end{align*}
	 Solving for $|E(\mathcal{H})|$ gives
	 \begin{equation}\label{edge-bound}
	 |E(\mathcal{H})| \leq \frac{d(B)}{r-c}(n-c).
	 \end{equation}
	 Now, by Claim~\ref{b-avg-deg} we can choose $\epsilon$ small enough and $n$ large enough so that the bound in (\ref{edge-bound}) is maximized when $c=k-1$. Thus,
	 \[
	 |E(\mathcal{H})| \leq \frac{\ell-1}{r-k+1}(n-k+1)
	 \]
	 for $n$ large enough.
\end{proof}

We now consider the case when $r \leq \ell+k-1$.
We begin with a construction of an $r$-uniform $n$-vertex Berge-$k \cdot S_\ell$-free hypergraph. Fix integers $\ell,k\geq 1$ and $r\geq 2$ such that $r \leq \ell+k-1$.
Put $n-k+1 = q \cdot \ell + t$ for $0 \leq t < \ell$. 

Let $A^*$ be a set of $k-1$ vertices and $B^*$ be a set of $n-k+1$ vertices. Partition the vertices of $B^*$ into $q$ classes of size $\ell$ and (if $t>0$) a single class of size $t< \ell$. For each partition class $S^*$ of $B^*$ we form a complete $r$-uniform hypergraph $K_{\ell+k-1}^r$ (or $K_{t+k-1}^r$) on the vertices of $A^* \cup S^*$. 
Let $\mathcal{H}({n,\ell,k,r})$ be the resulting hypergraph.
The number of hyperedges in $\mathcal{H}({n,\ell,k,r})$ is exactly 
\[
|E({n,\ell,k,r})|=\left(\binom{\ell+k-1}{r}-\binom{k -1}{r}\right)\frac{n-k+1-t}{\ell}+\binom{t+k-1}{r}.
\]
The skeleton of any Berge-$S_\ell$ in $\mathcal{H}({n,\ell,k,r})$ must use at least one vertex of $A^*$. Therefore, there are at most $k-1$ copies of a Berge-$S_\ell$ that have vertex-disjoint skeletons, i.e., $\mathcal{H}({n,\ell,k,r})$ is Berge-$k \cdot S_\ell$-free.

\begin{theorem}\label{small-r}
Fix integers $\ell,k \geq 1$ and $r\geq 2$ such that
	$r \leq \ell+k-2$. Then for $n$ large enough, 
	\[
	\ex_r(n,\textup{Berge-}k \cdot S_{\ell}) \leq \left(\binom{\ell+k-1}{r}-\binom{k -1}{r}\right)\left \lceil \frac{n-k+1}{\ell} \right \rceil+\binom{k-1}{r}.
	\]
	Furthermore, when $\ell$ divides $n-k+1$, we have equality and (as long as $r \geq 3$) $\mathcal{H}({n,\ell,k,r})$ is the unique hypergraph achieving this bound.
\end{theorem}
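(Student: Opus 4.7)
My plan is to adapt the strategy of the proof of Theorem~\ref{large-r}, but refined with a slice decomposition keyed on a small ``heavy'' set. Let $\mathcal{H}$ be an $r$-uniform $n$-vertex Berge-$k\cdot S_\ell$-free hypergraph. Fix a sufficiently large constant $D=D(\ell,k,r)$ and let $A=\{v\in V(\mathcal{H}):d(v)>D\}$ and $B=V(\mathcal{H})\setminus A$. An argument patterned on Claim~\ref{c-claim} — iteratively invoking Lemma~\ref{easy-sdr-lemma} or Lemma~\ref{sdr-lemma} to greedily build a Berge-$S_\ell$ at each heavy vertex whose skeleton avoids the $O(1)$ previously used vertices — shows $|A|\leq k-1$; a direct degree-sum estimate settles the case $|A|<k-1$, so I focus on $|A|=k-1$.

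For each $T\subseteq A$ with $|T|<r$ let $\mathcal{H}_T$ be the $(r-|T|)$-uniform hypergraph on $B$ with hyperedges $\{h\setminus T:\,h\in E(\mathcal{H}),\, h\cap A=T\}$; these slices (together with the at most $\binom{k-1}{r}$ hyperedges contained in $A$) partition $E(\mathcal{H})$. The central claim is that every such $\mathcal{H}_T$ is Berge-$S_\ell$-free. Indeed, a Berge-$S_\ell$ in $\mathcal{H}_T$ lifts to a Berge-$S_\ell$ in $\mathcal{H}$ with skeleton entirely in $B$, and because each vertex of $A$ has degree exceeding $D$, one can greedily extend by $k-1$ further pairwise vertex-disjoint Berge-$S_\ell$'s centered at the $A$-vertices (leaves drawn from $B$, avoiding the $O(1)$ previously used vertices) to exhibit a Berge-$k\cdot S_\ell$, contradicting the hypothesis.

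Granted this, Theorem~\ref{berge-sk}(1) applied to each slice in the regime $\ell>r-|T|$ gives $|\mathcal{H}_T|\leq \binom{\ell}{r-|T|}(n-k+1)/\ell$, and Vandermonde's identity yields
\[
\sum_{T\subseteq A,\,|T|<r}|\mathcal{H}_T|\;\leq\;\frac{n-k+1}{\ell}\sum_{j=0}^{r-1}\binom{k-1}{j}\binom{\ell}{r-j}\;=\;\frac{n-k+1}{\ell}\left(\binom{\ell+k-1}{r}-\binom{k-1}{r}\right).
\]
Combining with the at most $\binom{k-1}{r}$ hyperedges in $A$ and using $(n-k+1)/\ell\leq\lceil(n-k+1)/\ell\rceil$ gives the stated upper bound.

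The main obstacle is the regime $\ell\leq r-|T|$: in the extremal construction the contribution of such a slice is $\binom{\ell}{r-|T|}(n-k+1)/\ell$, which vanishes when $\ell<r-|T|$, whereas Theorem~\ref{berge-sk}(2) gives only the weaker bound $(\ell-1)(n-k+1)/(r-|T|)$. Here I plan to exploit the inequality $|A\setminus T|\geq (k-1)-(r-\ell)\geq 1$, which is precisely where the hypothesis $r\leq \ell+k-2$ is used: any hyperedge in such a slice $\mathcal{H}_T$ corresponds to an $r$-edge of $\mathcal{H}$ with at least $\ell+1$ vertices in $B$, and combined with hyperedges incident to the spare $A$-vertices we can form the forbidden Berge-$k\cdot S_\ell$, forcing $\mathcal{H}_T=\emptyset$ when $\ell<r-|T|$ and a matching-type bound $|\mathcal{H}_T|\leq (n-k+1)/\ell$ when $\ell=r-|T|$. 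Uniqueness of $\mathcal{H}(n,\ell,k,r)$ for $r\geq 3$ and $\ell\mid n-k+1$ then follows by tracing the equality cases: each slice $\mathcal{H}_T$ must equal a vertex-disjoint union of copies of $K_\ell^{r-|T|}$, and compatibility of these unions forces a common partition of $B$, recovering the class structure of $\mathcal{H}(n,\ell,k,r)$.
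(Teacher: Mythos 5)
The skeleton of your plan (slices $\mathcal{H}_T$ indexed by $T\subseteq A$, the observation that each slice is Berge-$S_\ell$-free when $|A|=k-1$ and all $A$-degrees exceed $D$, and the Vandermonde summation) is fine, and in the regime $\ell>r-|T|$ the per-slice bound from Theorem~\ref{berge-sk}(1) does match the target coefficient $\binom{\ell}{r-|T|}\frac{n-k+1}{\ell}$. The proof breaks exactly where you flag the obstacle, and your proposed fix is not valid. First, $\mathcal{H}_T=\emptyset$ when $\ell<r-|T|$ is not forced for $\ell\geq 2$: a single hyperedge with more than $\ell$ vertices in $B$ does not yield a Berge-$S_\ell$ whose skeleton lies in $B$ (a Berge-$S_\ell$ consists of $\ell$ \emph{distinct} hyperedges), and it cannot be combined with stars centered at the $k-1$ vertices of $A$ to produce $k$ pairwise vertex-disjoint skeletons. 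Concretely, take $\mathcal{H}(n,\ell,k,r)$ with $2\leq\ell<r\leq\ell+k-2$, choose an $r$-set $h_0\subseteq B^*$ meeting several partition classes, delete the $O(1)$ hyperedges meeting $h_0$ and add $h_0$: every Berge-$S_\ell$ skeleton still meets $A^*$ (vertices of $h_0$ have degree $1$, and any other $B$-vertex has only $\ell-1$ possible leaves in $B$), so the hypergraph is Berge-$k\cdot S_\ell$-free, has $|A|=k-1$ with huge degrees, yet its slice for $T=\emptyset$ is nonempty. Second, the boundary bound $|\mathcal{H}_T|\leq(n-k+1)/\ell$ when $\ell=r-|T|$ does not follow from Berge-$S_\ell$-freeness of that slice alone: an $(\ell-1)$-regular $\ell$-uniform hypergraph on $B$ is Berge-$S_\ell$-free with $\frac{\ell-1}{\ell}(n-k+1)$ hyperedges (this is exactly the sharpness example in Theorem~\ref{berge-sk}(2)), so slice-by-slice bounds overshoot the theorem by a linear amount. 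The information you are discarding is the joint one: hyperedges from \emph{different} slices through a common $B$-vertex can together form a Berge-$S_\ell$ with skeleton in $B$, so what must be controlled is the degree of each $B$-vertex in $\mathcal{H}$ (at most $\binom{\ell+k-2}{r-1}$), not the size of each slice separately. This is the mechanism the paper uses (Lemma~\ref{sdr-lemma}, the degree comparison with $\mathcal{H}^*$ vertex by vertex, and the multiset comparison of the link of a vertex of $A$ with its counterpart in $\mathcal{H}^*$), together with induction on $r$.

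Two further points. Your claim that the case $|A|<k-1$ is settled by "a direct degree-sum estimate" is too quick: without first bounding $d(x)$ for $x\in A$ one cannot close the count, since a priori such a vertex could have degree of higher order; the paper bounds these degrees by $O(n)$ precisely by observing that the link of $x$ is Berge-$(k-1)\cdot S_\ell$-free and invoking the induction hypothesis for uniformity $r-1$, and even then a "far pairs" argument is needed to reach a contradiction. Finally, the uniqueness assertion cannot be obtained by "tracing equality" in your inequalities, since the inequalities themselves are not available; the paper derives uniqueness from a local structural analysis in the case $c=k-1$ (every $B$-vertex has degree exactly $\binom{\ell+k-2}{r-1}$, Lemma~\ref{3lem} forces its link to be a clique containing all of $A$, and neighborhoods then propagate the block structure of $\mathcal{H}(n,\ell,k,r)$). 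So as it stands there is a genuine gap in the regime $\ell\leq r-|T|$, and repairing it essentially requires the joint, construction-comparison argument of the paper rather than per-slice extremal numbers.
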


\begin{proof}

In order to prove the theorem it is enough to restrict ourselves to the case when $\ell$ divides $n-k+1$.
We proceed by induction on $r$. 
The case when $r=2$ follows from Theorem~\ref{LLP-star} as a Berge-$S_\ell$ is simply a copy of the graph $S_\ell$, i.e.,
\begin{align*}
\ex_2(n, \textrm{Berge-}k\cdot S_\ell) = \ex(n,k \cdot S_\ell) &\leq\frac{\ell-1}{2}(n-k+1) + (k-1)(n-k+1)+\binom{k-1}{2}\\& = \frac{n-k+1}{\ell}\left(\binom{\ell+k-1}{2}-\binom{k-1}{2}\right)+\binom{k-1}{2}.
\end{align*}

So now let $r \geq 3$ and assume that the statement of the theorem holds for $r-1$.
Let $\ell,k,n$ satisfy the conditions of the theorem and (for simplicity of notation) put $\mathcal{H}^* = \mathcal{H}({n,\ell,k,r})$.
Let $\mathcal{H}$ be an $r$-uniform $n$-vertex hypergraph with no Berge-$k\cdot S_\ell$ and at least $|E(\mathcal{H}^*)|$ hyperedges.
	 Let $A$ be the set of vertices in $\mathcal{H}$ of degree greater than some fixed (large enough) constant $D = D(\ell,k,r)$ and let $B = V(\mathcal{H})-A$ be the remaining vertices. Put $c=|A|$. 
	 
	 \begin{claim}
	 	$c \leq k-1$.
	 \end{claim}
	 
	 \begin{proof}
	 The proof of this claim identical to that of Claim~\ref{c-claim} in the proof of Theorem~\ref{large-r}. We include the argument here to keep the proof self-contained.
	 
	 Suppose $c \geq k$ and let $A' \subseteq A$ be a set of $k$ vertices of degree at least $D$.
	 As $D$ is large enough, it is easy to see that there is a Berge-$S_\ell$ with center in $A'$ but whose skeleton is otherwise disjoint from $A'$. 
	 Now suppose that we have found a Berge-$(k-1) \cdot S_\ell$ whose skeleton intersects $A'$ in $(k-1)$ vertices (namely, only the centers of each star in the skeleton is in $A'$). This Berge-$(k-1) \cdot S_\ell$ has $(k-1)\ell$ hyperedges and its skeleton spans $(k-1)(\ell+1)$ vertices.
	 Let $x$ be the vertex in $A'$ not in the skeleton of this Berge-$(k-1) \cdot S_\ell$. Let us remove the hyperedges of the Berge-$(k-1) \cdot S_\ell$ from $\mathcal{H}$.
	 As $D$ is large enough, Lemma~\ref{sdr-lemma} implies that among the remaining hyperedges there is Berge-$S_{(k-1)(\ell+1)+\ell}$, denoted $\mathcal{S}$, with center $x$. Thus $\mathcal{S}$ is hyperege-disjoint from the Berge-$(k-1) \cdot S_\ell$. However, the skeleton of both of these subhypergraphs may intersect.
	 At most $(k-1)(\ell+1)$ vertices of the skeleton of $\mathcal{S}$ are shared with the skeleton of the Berge-$(k-1) \cdot S_\ell$. Therefore, there is a Berge-$S_\ell$ whose skeleton is disjoint from the skeleton of the Berge-$(k-1) \cdot S_\ell$. In particular, we have a Berge-$k \cdot S_\ell$, a contradiction.
	  \end{proof}

\bigskip

{\bf Case 1:} $c < k-1$.

\bigskip

    For each $x \in A$ consider the link hypergraph $\mathcal{H}_x = \{e - x \, |\, x \in e \in \mathcal{H}\}$.
	If $\mathcal{H}_x$ contains an $(r-1)$-uniform Berge-$(k-1)\cdot S_\ell$, then $\mathcal{H}$ contains a Berge-$(k-1)\cdot S_\ell$ whose skeleton does not use the vertex $x$. Now remove the hyperedges of the Berge-$(k-1)\cdot S_\ell$ from $\mathcal{H}$. As $D$ is large enough, the vertex $x$ is still incident to enough hyperedges so that we can find greedily a copy of a Berge-$S_\ell$ with center $x$ that is hyperedge-disjoint from the Berge-$(k-1)\cdot S_\ell$ and whose skeleton is disjoint from the skeleton of the Berge-$(k-1)\cdot S_\ell$. Together we have a Berge-$k\cdot S_\ell$, a contradiction. Therefore, $\mathcal{H}_x$ is an $(r-1)$-uniform Berge-$(k-1) \cdot S_\ell$-free hypergraph on $n-1$ vertices. By induction we have
	\[
	d(x)=|\mathcal{H}_x|\leq \left(\binom{\ell+k-2}{r-1}-\binom{k -2}{r-1}\right)  \frac{n-k+1}{\ell} +\binom{k-2}{r-1}.
	\]
	Let $d(B)$ be the average degree of the vertices in $B$.
	Thus,
	\begin{align*}
	&\sum_{x\in V(\mathcal{H})}{d(x)}=\sum_{x \in A}{d(x)}+\sum_{x \in B}{d(x)} \\
	&\leq  c\left( \left(\binom{\ell+k-2}{r-1}-\binom{k -2}{r-1}\right)  \frac{n-k+1}{\ell}+\binom{k-2}{r-1} \right) + (n-c)d(B).
	\end{align*}

	On the other hand, for $\mathcal{H}^*=\mathcal{H}({n,\ell,k,r})$ we have 
	\begin{align*}
	&\sum_{x\in V(\mathcal{H^*})}{d(x)}=\sum_{x \in A^*}{d(x)}+\sum_{x \in B^*}{d(x)}\\& \geq  (k-1)\left( \left(\binom{\ell+k-2}{r-1}-\binom{k -2}{r-1}\right)\frac{n-k+1}{\ell}+\binom{k-2}{r-1} \right) + (n-k+1)\binom{\ell+k-2}{r-1}.
	\end{align*}
	
	As $c< k-1$ and  $|E(\mathcal{H})| \geq |E(\mathcal{H}^*)|$, if we
	compare the coefficients of $n$ in the two inequalities, it is clear that we must have $d(B) \geq \binom{\ell+k-2}{r-1}+1$ for $n$ large enough.

	Now let us estimate the number of vertices in $B$ of degree greater than $\binom{\ell+k-2}{r-1}$.
    Let $s$ be the number of vertices of degree at most $\binom{\ell+k-2}{r-1}$ in $B$. Recall that the vertices in $B$ have degree at most $D$.
	Thus, the sum of degrees in $B$ is at most $\binom{\ell+k-2}{r-1} s + D(n-c-s)$. Combining these estimates and solving for $s$ gives
	\[
	s\leq\frac{D-\binom{\ell+k-2}{r-1}-1}{D-\binom{\ell+k-2}{r-1}}(n-c) = (1-\epsilon')(n-c)
	\]
	for some $\epsilon'>0$ not depending on $n$.
	Therefore, the number of vertices of degree greater than $\binom{\ell+k-2}{r-1}$ is $\epsilon' n = \Omega(n)$.
	
		Let us call a pair of vertices $u,v \in B$ {\it far} if they do not share a common neighbor in $B$ (they may still have a common neighbor in $A$). As the vertices in $B$ have constant maximum degree we can find a subset $B'$ of size $\Omega(n)$ such that all vertices have degree greater than $\binom{\ell+k-2}{r-1}$ and all pairs of vertices are far.
		
    By Lemma~\ref{sdr-lemma},
    for each vertex $u$ in $B'$ there is a Berge-$S_{\ell+k-1}$ with center $u$. Therefore, there is a Berge-$S_\ell$ with center $u$ whose skeleton is disjoint from $A$. 
     As any two vertices in $B'$ do not share a common neighbor in $B$ we have a collection of hyperedge-disjoint copies of Berge-$S_\ell$. Thus, as $n$ is large enough we can find a Berge-$k \cdot S_{\ell}$, a contradiction.

\bigskip

{\bf Case 2:} $c=k-1$.

\bigskip

In this case we have that each vertex in $B$ has degree at most $\binom{\ell+k-2}{r-1}$. Indeed, if there is a vertex $x$ of degree greater than $\binom{\ell+k-2}{r-1}$, then there is a Berge-$S_{\ell+k-1}$ with center $x$ by Lemma~\ref{sdr-lemma}. The skeleton of this Berge-$S_{\ell+k-1}$ uses at most $k-1$ vertices from $A$, so there remains a Berge-$S_\ell$, denoted $\mathcal{S}$ whose skeleton is contained in $B$. 
As $D$ is large enough, we may construct $k-1$ more pairwise hyperedge-disjoint copies of a Berge-$S_\ell$ (with pairwise vertex-disjoint skeletons) that are hyperedge-disjoint from $\mathcal{S}$ and whose skeletons are vertex-disjoint from the skeleton of $\mathcal{S}$. Therefore, we have a Berge-$k \cdot S_\ell$ in $\mathcal{H}$, a contradiction.

We distinguish two subcases based on the degrees in $B$. 

\bigskip

{\bf Case 2.1:} There exists a vertex $x \in B$ with $d(x)<\binom{\ell+k-2}{r-1}$.

\bigskip

Let us compare $\mathcal{H}$ to the construction $\mathcal{H}^*$. Every vertex in $B$ has degree at most $\binom{\ell+k-2}{r-1}$ while every vertex in $B^*$ has degree exactly $\binom{\ell+k-2}{r-1}$. 

As $|E(\mathcal{H})| \geq |E(\mathcal{H}^*)|$ this implies that there exists a vertex $y \in A$ and a vertex $y^* \in A^*$ such that $d(y) > d(y^*)$. Define two $j$-uniform multi-hypergraphs as follows:
\[
E_j^y = \left\{e \setminus A\, \mid \, y \in e \text{ and } |e \setminus A| = j \right\}
\text{ and }
E_j^{y^*} = \left\{e \setminus A^*\, \mid \, y \in e \text{ and } |e \setminus A^*| = j \right\}.
\]
Note that the hyperedges in $E^y_j$ have multiplicity at most $\binom{k-2}{r-1-j}$ and those in $E^{y^*}_j$
have multiplicity exactly $\binom{k-2}{r-1-j}$.

Observe that when $k-1 \geq r-1$, each vertex of $B^*$ is in a hyperedge with each subset of $A^*$ of size $r-1$. This implies that $|E^{y^*}_{1}|\geq |E^{y}_{1}|$. When $k-1 < r-1$, then $E^{y^*}_{1}= E^{y}_{1} = \emptyset$.
Therefore, as $d(y) > d(y^*)$, we have that $|E^y_j|>|E^{y^*}_j|$ for some $j\geq 2$. Now let $E$ be the $j$-uniform hypergraph resulting from deleting all repeated hyperedges in $E^y_j$.

Thus, the number of hyperedges in the $j$-uniform hypergraph $E$ is
\begin{align*}
|E| &\geq \binom{k-2}{r-1-j}^{-1}|E_j^{y}| \\ & > \binom{k-2}{r-1-j}^{-1}|E^{y^*}_j|  
= \binom{k-2}{r-1-j}^{-1}\binom{k-2}{r-1-j} \frac{n-k+1}{\ell} \binom{\ell}{j} \\
 & = \frac{n-k+1}{\ell} \binom{\ell}{j}  \geq \ex_j(n-k+1,\textrm{Berge-}S_\ell).
\end{align*}

Therefore, there is a $j$-uniform Berge-$S_\ell$ in $E$ on the vertices of $B$. As each hyperedge of $E$ is contained in a hyperedge of $\mathcal{H}$, this corresponds to a Berge-$S_\ell$ in $\mathcal{H}$ whose skeleton is contained in $B$.
As before, the degree condition on the vertices in $A$ guarantees the existence of a Berge-$(k-1) \cdot S_\ell$ that together with this Berge-$S_\ell$ forms a Berge-$k \cdot S_\ell$ in $\mathcal{H}$, a contradiction.

\bigskip

{\bf Case 2.2:} For every vertex $x \in B$ we have $d(x)=\binom{\ell+k-2}{r-1}$.

\bigskip

If the neighborhood $N(x)$ of $x$ contains more than $\ell+k-2$ vertices, then by Lemma~\ref{sdr-lemma} there is a Berge-$S_{\ell+k-1}$ with center $x$. As before, we can use this Berge-$S_{\ell+k-1}$ to show the existence of a Berge-$k\cdot S_{\ell}$, a contradiction.
Therefore, $|N(x)| = \ell+k-2$ and Lemma~\ref{3lem} implies that the link hypergraph $\mathcal{H}_x$ is a $K_{\ell+k-2}^{r-1}$. The vertices of this $K_{\ell+k-2}^{r-1}$ are exactly $N(x)$. If $N(x)$ intersects $A$ in fewer than $k-1$ vertices, then we have a Berge-$S_\ell$ whose skeleton is contained in $B$ which we can combine with the Berge-$(k-1)\cdot S_\ell$ with centers in $A$ to form a Berge-$k \cdot S_\ell$, a contradiction. 
Therefore, $N(x) \cap A = A$ and $|N(x) \cap B| = \ell-1$. Now let $y$ be a vertex in $N(x) \cap B$. For each $z$ different from $y$ in $N(x) \cup \{x\}$, there is a hyperedge containing $x$ and $y$. Therefore, the neighborhood of $y$ contains $N(x) \cup \{x\}$. If $N(y)$ is any larger, then by Lemma~\ref{sdr-lemma} we can find a Berge-$S_{\ell+k-1}$ with center $y$, which again leads to a contradiction.

This implies that $x$ is contained in a complete graph $K^r_{\ell+k-1}$ that intersects $A$ in exactly $k-1$ vertices. This holds for every vertex $x \in B$, so $\mathcal{H}$ has the exact structure as the construction $\mathcal{H}^*$.
\end{proof}

With some additional argument it seems likely that one can show that $\h({n,\ell,k,r})$ is the unique extremal hypergraph for Berge-$k \cdot S_\ell$ even when we do not have the appropriate divisibility condition on $n+k+1$.

We conclude this section with an easy consequence of Theorem~\ref{small-r}. We need a further definition. For fixed graphs $H$ and $F$ let $\ex(n,H,F)$ denote the maximum number of copies of a subgraph $H$ in an $n$-vertex $F$-free graph. This generalization of the classical Tur\'an extremal number $\ex(n,F)$ was introduced by Alon and Shikhelman \cite{AS}. These {\it generalized Tur\'an numbers} are closely related to Tur\'an numbers for Berge hypergraphs. See \cite{G_P2, Palmer, GMP} for details.

If $G$ is an $n$-vertex $F$-free graph, then we can define an $r$-uniform hypergraph $\h$ with the same vertex set as $G$ and an $r$-set in $\h$ is a hyperedge if and only if it is the vertex set of a $K_r$ in $G$. It is easy to see that as $G$ is $F$-free, then $\h$ contains no Berge-$F$. Thus
\begin{equation}\label{counting-bound}
 \ex(n,K_r,F) \leq \ex_r(n,\textrm{Berge-}F)
\end{equation}
 holds in general. Therefore, the upper bound in Theorem~\ref{small-r} gives the following corollary. The construction is nearly identical to the hypergraph $\h({n,\ell,k,r})$. We leave the details to the reader.
 
 \begin{cor}\label{kr}
    Fix integers $\ell,k \geq 1$, $r\geq 3$. 
	If $r \leq \ell+k-2$, $\ell$ divides $n-k+1$ and $n$ is large enough then,
	\[
	\ex(n,K_r,k \cdot S_{\ell}) = \left(\binom{\ell+k-1}{r}-\binom{k -1}{r}\right) \frac{n-k+1}{\ell} +\binom{k-1}{r}.
	\]
\end{cor}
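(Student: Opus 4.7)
The upper bound is essentially for free: the hypergraph $\mathcal{H}$ obtained from a $K_r$-clique hypergraph of a $(k\cdot S_\ell)$-free graph $G$ is a Berge-$(k \cdot S_\ell)$-free $r$-uniform hypergraph on the same vertex set, so by inequality~(\ref{counting-bound}) together with the equality case of Theorem~\ref{small-r} (the divisibility assumption $\ell \mid n-k+1$ removes the ceiling), we obtain
\[
\ex(n,K_r,k\cdot S_\ell) \leq \ex_r(n,\textup{Berge-}k \cdot S_\ell) = \left(\binom{\ell+k-1}{r}-\binom{k -1}{r}\right) \frac{n-k+1}{\ell} +\binom{k-1}{r}.
\]
The bulk of the work is to produce a matching construction.

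For the lower bound I plan to imitate the hypergraph $\mathcal{H}(n,\ell,k,r)$ but in the graph setting. Take a vertex set partitioned into $A$ of size $k-1$ and $B$ of size $n-k+1$, partition $B$ into $(n-k+1)/\ell$ classes of size $\ell$, and for each class $S$ form a complete graph $K_{\ell+k-1}$ on $A \cup S$. Call the resulting graph $G$. The analogous argument as for $\mathcal{H}(n,\ell,k,r)$ shows $G$ is $k \cdot S_\ell$-free: every vertex in $B$ has exactly $\ell-1$ neighbors in $B$, so any copy of $S_\ell$ in $G$, regardless of where its center lies, must use at least one vertex of $A$; hence there cannot be $k$ pairwise vertex-disjoint copies of $S_\ell$ since $|A|=k-1$.

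It remains to count copies of $K_r$ in $G$. Each of the $(n-k+1)/\ell$ cliques $K_{\ell+k-1}$ contributes $\binom{\ell+k-1}{r}$ copies of $K_r$, but the $\binom{k-1}{r}$ copies of $K_r$ contained entirely in $A$ are shared among all these cliques. An inclusion–exclusion (or equivalently, counting each $K_r$ by which clique it belongs to after subtracting the ones fully in $A$) yields
\[
\binom{\ell+k-1}{r}\cdot \frac{n-k+1}{\ell} - \binom{k-1}{r}\left(\frac{n-k+1}{\ell}-1\right) = \left(\binom{\ell+k-1}{r}-\binom{k-1}{r}\right)\frac{n-k+1}{\ell}+\binom{k-1}{r},
\]
which matches the upper bound.

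\textbf{Main obstacle.} There is essentially no obstacle: the nontrivial work has already been done in Theorem~\ref{small-r}. The only care needed is to correctly handle the double-counting of $K_r$'s sitting entirely inside $A$, which are simultaneously a subgraph of every $K_{\ell+k-1}$ in the construction; this is exactly why the $+\binom{k-1}{r}$ term appears (rather than a multiple of it) in the closed form.
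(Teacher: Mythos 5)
Your proposal is correct and follows essentially the same route as the paper: the upper bound comes from inequality (\ref{counting-bound}) combined with Theorem~\ref{small-r} (the divisibility hypothesis removing the ceiling), and the lower bound is the graph analogue of $\mathcal{H}(n,\ell,k,r)$, whose $K_r$-count you handle correctly, including the single shared $\binom{k-1}{2}$-type term $\binom{k-1}{r}$ for cliques inside $A$. The paper leaves exactly these details to the reader, so nothing is missing.
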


When $r \geq \ell+k-1$, Theorem~\ref{large-r} gives an upper bound on $\ex(n,K_r,k \cdot S_\ell)$ that is linear in $n$. However, it seems likely that the correct bound is a constant depending on $\ell,k$ and $r$. Indeed, we conjecture that the unique $k \cdot S_\ell$-free graph maximizing the number of copies of $K_r$ is a complete graph on $k(\ell+1)-1$ vertices (to be precise here we should assume every edge is in a $K_r$). This conjecture is supported by a theorem of Wang \cite{Wa} that implies
\[
\ex(n,K_r,k\cdot S_1) = \binom{2k-1}{r}
\]
when $r \geq k+2$ and $n \geq 2k-1$ which is essentially the case of the conjecture when $\ell=1$. The shifting method employed in \cite{Wa} may be effective in proving this conjecture. For more on the problem of forbidding disjoint copies of a graph see Gerbner, Methuku and Vizer \cite{GMV2}.

\section*{Acknowledgments}
The authors would like to thank D\'aniel Gerbner and Abhishek Methuku for several useful discussions.

\end{document}